\documentclass[12pt]{article}

\usepackage[margin=1in]{geometry}
\usepackage{amsmath,amsthm,amssymb,amsfonts}
\usepackage{mathtools}
\usepackage{hyperref}
\usepackage{url}
\usepackage[numbers,sort&compress]{natbib}
\usepackage{enumitem}
\usepackage{silence}
\usepackage[expansion=false]{microtype}
\usepackage{booktabs}

\theoremstyle{plain}
\newtheorem{theorem}{Theorem}[section]
\newtheorem{proposition}[theorem]{Proposition}
\newtheorem{lemma}[theorem]{Lemma}
\newtheorem{corollary}[theorem]{Corollary}

\theoremstyle{definition}
\newtheorem{definition}[theorem]{Definition}
\newtheorem{example}[theorem]{Example}
\newtheorem{hypothesis}{Hypothesis}

\theoremstyle{remark}
\newtheorem{remark}[theorem]{Remark}

\newcommand{\R}{\mathbb{R}}
\newcommand{\C}{\mathbb{C}}
\newcommand{\tr}{\operatorname{tr}}
\newcommand{\doi}[1]{\textsc{doi:}~\texttt{#1}}
\DeclareMathOperator{\dt}{det}
\DeclareMathOperator{\spec}{spec}
\DeclareMathOperator{\sgn}{sgn}
\DeclareMathOperator{\Real}{Re}

\newcommand{\ip}[2]{\langle #1,\, #2\rangle}
\newcommand{\Pstar}{P^{*}}
\newcommand{\xstar}{x^{*}}
\newcommand{\ystar}{y^{*}}
\newcommand{\wO}{\omega_{0}}
\newcommand{\lone}{\ell_{1}}

\title{The trace field on the prey nullcline:\\
	criticality of planar Hopf bifurcations\\
	on the admissible branch}

\author{%
	E.~Chan-L\'opez\thanks{%
		E-mail: \texttt{eduardo.clopez13@gmail.com}.\;
		\textsc{orcid}:~0009-0003-7712-7907.}
}

\date{%
	Divisi\'on Acad\'emica de Ciencias B\'asicas,
	Universidad Ju\'arez Aut\'onoma de Tabasco, 86690 Cunduac\'an,
	Tabasco, Mexico}

\begin{document}
	\maketitle
	
	\begin{abstract}
		In \cite{Companion} we proved that along the prey nullcline $y=g(x)$ of a
		Gause system one has $J_{11}=p\,g'$ \cite[Lem.~1]{Companion}, and observed
		that for a predator-dependent response the same identity holds with $p$
		replaced by the bracket $\Phi+g\Phi_{y}$ \cite[Rem.~2]{Companion}. Here we
		note that this bracket is exactly $P=-f_{1y}|_{\gamma}$, so that the identity
		$J_{11}=P\,g'$ is hypothesis-free: it holds for any planar field whose first
		nullcline is a graph, and the positivity condition under which
		\cite[Thm.~4]{Companion} applies is precisely $P>0$. Together with
		$\tr A=Pg'+J_{22}$ and predator self-damping this forces $g'(\xstar)>0$ at any
		Hopf point: the critical points of $g$ are spectral barriers and the Hopf
		locus lies on ascending branches. That result determines where an oscillatory
		instability can occur. The present paper determines how it unfolds once
		located.
		
		We work with the trace field $D=f_{1x}+f_{2y}$ and its restriction
		$\tau=D|_{\gamma}$ to the nullcline. Using the nullcline as a coordinate
		($u=x$, $v=y-g(x)$) and Hadamard's lemma, the field becomes
		$\dot u=-vH$, $\dot v=W$ with $H(u,0)=P$, and the mixed jet of $W$ is the jet
		of $\tau$ and of $\nu=f_{2}|_{\gamma}$; this yields a five-term closed form for
		the first Lyapunov coefficient $\lone$, valid for predator-dependent functional
		responses.
		
		Two structural facts are established in full generality, with no Gause
		hypothesis and no straightening: the pairing of the cubic form with the
		critical eigenvectors is purely imaginary in the two extreme slots, whence
		$\partial\lone/\partial f_{1yyy}=0$ and $\partial\lone/\partial f_{2xxx}=0$.
		Straightening the nullcline annihilates two further slots, and a third,
		distinct mechanism --- the reparametrization $\varphi\mapsto g$ --- removes
		$p'''$ inside the Gause class. These three mechanisms are proved separately and
		not conflated.
		
		Comparing $\lone$ with the $y$-affine truncation of the $3$-jet at fixed linear
		part gives $\lone=\lone^{\mathrm{aff}}+\Delta$ with
		$\Delta=\Lambda\bigl(f_{1xyy}+\wO^{-2}\ip{\partial_{x}f}{\nabla D}f_{1yy}\bigr)$
		and $\Lambda>0$ under the Hopf hypotheses alone. On the Hopf locus we prove
		$\ip{\partial_{x}f}{\nabla D}=J_{11}\tau'+(\wO^{2}/P)D_{y}$, so the very
		quantity $J_{11}=Pg'$ whose positivity localizes the bifurcation reappears at
		third order as the weight of the tangential trace derivative. The identity is
		covariant, not invariant, under rescaling of the critical eigenvector, with
		$\Lambda$ carrying the weight.
	\end{abstract}
	
	\noindent\textbf{Keywords:} Hopf bifurcation, first Lyapunov coefficient, prey
	nullcline, trace field, normal form, predator-dependent functional response.
	
	\medskip
	\noindent\textbf{2020 MSC:} 34C23, 37G15, 92D25, 34C05, 65P30.
	
	\section{Introduction}\label{sec:intro}
	Hopf bifurcation provides a local mechanism through which an equilibrium of a
	planar dynamical system loses stability and a small-amplitude periodic orbit is
	created or destroyed. In applications, however, locating a Hopf bifurcation and
	determining its criticality are distinct problems. The linearization determines
	where the equilibrium becomes spectrally critical, whereas the first Lyapunov
	coefficient determines how the nonlinear dynamics unfold there. In particular,
	the sign of the first Lyapunov coefficient distinguishes the local supercritical
	and subcritical scenarios under the usual transversality assumptions
	\cite{Kuznetsov2004}. Thus, a geometric criterion that localizes the Hopf locus
	does not by itself determine the type of oscillatory instability that occurs on
	that locus.
	
	This distinction is especially relevant in planar predator--prey systems.
	For a broad class of models, the prey nullcline provides a natural geometric
	organizer of the positive equilibrium set; this is the classical reading of the
	hump-shaped prey nullcline introduced by Rosenzweig and MacArthur
	\cite{Rosenzweig1963}, later sharpened into the paradox of enrichment
	\cite{Rosenzweig1971}. In Gause-type systems \cite{KuangFreedman1988}, the
	relation between the slope of the prey nullcline and the $(1,1)$ entry of the
	Jacobian makes it possible to constrain the location of Hopf bifurcations
	without computing the full nonlinear normal form. In previous work, this
	observation was used to show that, under a positivity condition on the predation
	factor and a predator self-damping condition, Hopf bifurcations can occur only
	on ascending branches of the prey nullcline \cite[Thm.~4]{Companion}. The
	critical points of the nullcline therefore act as spectral barriers: they
	separate regions in which an oscillatory instability cannot occur. That result
	answers a geometric localization question, but it leaves open the next local
	question: once a Hopf point has been confined to an admissible branch, what
	determines its criticality?
	
	The standard answer is the first Lyapunov coefficient. In general, however,
	$\ell_1$ is expressed in terms of the quadratic and cubic jets of the vector
	field and of the associated eigenvector and resolvent data
	\cite[\S3.5]{Kuznetsov2004}, and it is in this form that it is evaluated by
	numerical bifurcation software \cite{DhoogeGovaertsKuznetsov2003}. Such a
	formula is well suited to a general normal-form calculation, but it obscures the
	geometry that is available in a system possessing a graph nullcline. In
	particular, the full third-order jet contains many coefficients, and it is not
	immediately clear which of them actually influence the criticality of the Hopf
	bifurcation. A direct substitution of a particular predator--prey model into the
	standard formula can therefore produce a complicated expression without
	revealing which features of the model are dynamically essential; detailed
	bifurcation analyses of specific families, such as the study of a nonmonotonic
	response in \cite{XiaoRuan2001}, obtain the criticality but at the cost of a
	computation tied to that family.
	
	The purpose of this paper is to reorganize the first Lyapunov coefficient
	around the geometry of the prey nullcline and the trace field. Let
	\[
	D=f_{1x}+f_{2y}
	\]
	denote the trace field and let
	\[
	\tau=D|_{\gamma},
	\qquad
	\nu=f_2|_{\gamma}
	\]
	denote its restriction and the predator component restricted to the prey
	nullcline, respectively. If the first nullcline is locally written as
	\[
	\gamma=\{(x,g(x))\},
	\]
	and
	\[
	P=-f_{1y}|_{\gamma}>0,
	\]
	then differentiation of the nullcline identity gives the elementary but
	structurally important relation
	\[
	J_{11}=Pg'.
	\]
	Unlike its usual formulation in Gause systems, this identity requires no
	specific ecological decomposition of the vector field: it follows solely from
	the graph structure of the nullcline. Consequently, the same quantity that
	controls the spectral localization of the Hopf point also provides the natural
	tangential coefficient for the local nonlinear analysis.
	
	Using the nullcline itself as a coordinate, we derive a closed expression for
	the first Lyapunov coefficient in terms of a small collection of geometric
	jet data (Theorem~\ref{thm:main}). The resulting formula contains five terms
	and separates tangential and transverse derivatives of the trace field. In this representation,
	criticality is no longer presented as an opaque function of the complete
	third-order Taylor expansion. Instead, the relevant information is organized
	according to its relation to the nullcline and to the trace field. The formula
	is valid for predator-dependent functional responses and therefore applies
	beyond the predator-independent Gause setting considered in the localization
	result.
	
	The reduction of the relevant jet is itself a structural result. We show first
	that, for an arbitrary planar vector field at a Hopf point, two apparently
	independent cubic coefficients,
	\[
	f_{1yyy}
	\qquad\text{and}\qquad
	f_{2xxx},
	\]
	do not contribute to $\ell_1$ (Proposition~\ref{prop:generalelision}). This
	cancellation does not rely on a nullcline,
	on straightening coordinates, or on the Gause structure. It follows from the
	reality properties of the critical eigenvector pairing. Thus, these two
	directions of the cubic jet are dynamically invisible to the first Lyapunov
	coefficient.
	
	Two additional reductions arise from different geometric mechanisms. After
	straightening the nullcline, two further cubic slots are annihilated by the
	same local structure that produces the adapted coordinate representation.
	Within the Gause class, a third and independent reduction occurs when the prey
	growth law is reparametrized in terms of the nullcline function $g$: the
	third derivative $p'''$ is eliminated from the expression for $\ell_1$. These
	three mechanisms have different origins and different scopes. Keeping them
	separate is essential: the first is a general property of planar Hopf
	bifurcation, the second is associated with the nullcline geometry, and the
	third is specific to the Gause parametrization.
	
	The predator-dependent case reveals a further layer of structure. Functional
	responses that depend on the predator density, such as those of Beddington and
	DeAngelis \cite{Beddington1975,DeAngelis1975} and of Crowley and Martin
	\cite{CrowleyMartin1989}, are not a mathematical convenience: in a comparative
	study of nineteen data sets they were found to describe observed feeding rates
	better than the prey-dependent Holling type~II model \cite{SkalskiGilliam2001},
	and the degree of predator dependence ranges from weak interference to full
	ratio dependence \cite{ArditiGinzburg1989}. We compare the full coefficient
	$\ell_1$ with the coefficient $\ell_1^{\mathrm{aff}}$ obtained from the
	$y$-affine truncation of the third-order jet while keeping the linear part
	fixed. This gives the exact decomposition (Theorem~\ref{thm:decomp})
	\[
	\ell_1=\ell_1^{\mathrm{aff}}+\Delta,
	\]
	where
	\[
	\Delta
	=
	\Lambda
	\left(
	f_{1xyy}
	+
	\frac{\langle \partial_x f,\nabla D\rangle}{\omega_0^2}
	f_{1yy}
	\right),
	\qquad
	\Lambda>0.
	\]
	This decomposition identifies precisely the two channels through which
	predator dependence can affect the first Lyapunov coefficient. It also makes
	clear why replacing a predator-dependent response by a frozen response is not
	the appropriate comparison: such a replacement can alter the linear part and
	therefore changes the Hopf frequency itself. The relevant comparison is a jet
	truncation performed at fixed linearization.
	
	The quantity
	\[
	\langle \partial_x f,\nabla D\rangle
	\]
	appearing in the predator-dependent correction is not an accidental
	combination of derivatives. On the Hopf locus, we prove the identity
	(Corollary~\ref{cor:bridge})
	\[
	\langle \partial_x f,\nabla D\rangle
	=
	J_{11}\tau'
	+
	\frac{\omega_0^2}{P}D_y.
	\]
	Using $J_{11}=Pg'$ then yields
	\[
	\Delta
	=
	\Lambda
	\left[
	f_{1xyy}
	+
	\left(
	\frac{Pg'}{\omega_0^2}\tau'
	+
	\frac{D_y}{P}
	\right)f_{1yy}
	\right].
	\]
	This identity provides the main conceptual link between the localization and
	criticality problems. At first order, $J_{11}=Pg'$ determines on which
	branches a Hopf bifurcation can occur. At third order, the same quantity
	weights the tangential derivative of the trace field in the correction
	produced by predator dependence. Thus, spectral localization and nonlinear
	criticality are not separate calculations placed side by side: they are
	successive orders of the same local geometric organization.
	
	The resulting framework also clarifies the meaning of the affine comparison.
	The quantity $\ell_1^{\mathrm{aff}}$ is not the ``nullcline contribution'' to
	$\ell_1$; it is the first Lyapunov coefficient of a specific $y$-affine jet
	truncation at fixed linear part. The correction $\Delta$ isolates exactly the
	two cubic channels associated with predator-dependent curvature. In particular,
	the correction need not be small. In the examples considered below, it
	preserves the sign of $\ell_1$ but changes its magnitude substantially in the
	Beddington--DeAngelis case, illustrating that predator dependence can affect
	the quantitative nonlinear scaling even when the qualitative criticality
	remains unchanged.
	
	These results lead to a local hierarchy for planar predator--prey Hopf
	bifurcations:
	\[
	\text{nullcline geometry}
	\longrightarrow
	\text{spectral localization}
	\longrightarrow
	\text{jet reduction}
	\longrightarrow
	\text{Hopf criticality}.
	\]
	The hierarchy is useful for both analysis and computation. Rather than
	evaluating the full normal-form expression independently at every model
	instance, one can first identify the admissible branch from the nullcline
	geometry and then evaluate the reduced criticality formula on that branch.
	The resulting organization separates universal planar properties from
	features associated with nullcline geometry and from genuinely
	predator-dependent effects.
	
	The paper is organized as follows. Section~\ref{sec:framework} introduces
	the graph-nullcline framework, the Hopf hypotheses, the normalization of the
	critical eigenvectors, and the standard expression for the first Lyapunov
	coefficient. Section~\ref{sec:localization} establishes the admissible-branch
	localization result in the present general graph-nullcline setting.
	Section~\ref{sec:normalform} introduces the nullcline-adapted coordinates and
	derives the jet relations used in the criticality calculation.
	Section~\ref{sec:thirdorder} identifies the third-order coefficients that do not
	contribute to $\ell_1$ and separates the corresponding elimination mechanisms.
	Section~\ref{sec:main} derives the closed five-term formula for $\ell_1$.
	Section~\ref{sec:gause} specializes the result to Gause systems and establishes
	the reparametrization reduction. Section~\ref{sec:predator} analyzes
	predator-dependent functional responses and derives the decomposition
	$\ell_1=\ell_1^{\mathrm{aff}}+\Delta$ together with its bridge to the
	localization identity. Section~\ref{sec:validation} gives exact and
	high-precision validations for representative predator--prey models.
	Finally, Section~\ref{sec:discussion} discusses the resulting geometric
	organization, Bautin candidates, and the limits of the present planar
	framework.
	
	\section{Framework}\label{sec:framework}
	
	All fields are $C^{4}$ on an open $U\subseteq\R^{2}$. We write $A=Df(\Pstar)$
	with entries $J_{ij}$ and $D=f_{1x}+f_{2y}$.
	
	\begin{definition}[Graph nullcline]\label{def:graph}
		$f$ has a \emph{graph nullcline} over an open interval $I$ if there is
		$g\in C^{4}(I)$ with $(x,g(x))\in U$, $f_{1}(x,g(x))=0$ and
		$f_{1y}(x,g(x))\neq0$ for $x\in I$. We put
		$\gamma=\{(x,g(x)):x\in I\}$ and $P(x)=-f_{1y}(x,g(x))$.
	\end{definition}
	
	\begin{hypothesis}\label{HGraph} $P>0$ on $I$. \end{hypothesis}
	
	\begin{lemma}[Nullcline trace identity]\label{lem:traceid}
		Under Definition~\ref{def:graph}, $f_{1x}(x,g(x))=P(x)g'(x)$ for all $x\in I$.
		Under \ref{HGraph}, $\sgn f_{1x}|_{\gamma}=\sgn g'$, and
		$\tau:=D|_{\gamma}=Pg'+f_{2y}|_{\gamma}$.
	\end{lemma}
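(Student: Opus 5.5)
The plan is to differentiate the defining identity of the nullcline along $\gamma$ and read off the three claims one after another. By Definition~\ref{def:graph}, $f_{1}(x,g(x))=0$ for every $x\in I$. Since $f\in C^{4}$ and $g\in C^{4}(I)$, the composite $x\mapsto f_{1}(x,g(x))$ is $C^{1}$ (indeed $C^{4}$), so the chain rule applies and gives
\[
0=\frac{d}{dx}f_{1}(x,g(x))=f_{1x}(x,g(x))+f_{1y}(x,g(x))\,g'(x),\qquad x\in I.
\]
Substituting $f_{1y}(x,g(x))=-P(x)$ from the definition of $P$ yields $f_{1x}(x,g(x))=P(x)g'(x)$, which is the first assertion. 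This step uses only the graph structure, not the sign of $P$; the hypothesis $f_{1y}\neq0$ is needed only to define $g$ (via the implicit function theorem) and to make $P$ nonvanishing.

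For the sign statement, I will invoke Hypothesis~\ref{HGraph}. Since $P(x)>0$, multiplication by $P(x)$ preserves signs, so $\sgn f_{1x}|_{\gamma}=\sgn(Pg')=\sgn g'$ pointwise on $I$, including the case $g'(x)=0$.

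Finally, for the trace I will restrict $D=f_{1x}+f_{2y}$ to $\gamma$ and substitute the identity just proved for the first summand, giving $\tau=Pg'+f_{2y}|_{\gamma}$. This last substitution holds without Hypothesis~\ref{HGraph}. No step is a real obstacle; the only point needing care is the regularity that justifies the chain rule, and this is supplied by the standing $C^{4}$ assumptions.
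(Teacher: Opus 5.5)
Your proof is correct and is essentially the paper's argument: differentiate $f_{1}(x,g(x))\equiv0$ by the chain rule to get $f_{1x}=-f_{1y}g'=Pg'$ on $\gamma$. The sign statement then follows from $P>0$, and the trace formula from substituting into $D=f_{1x}+f_{2y}$.
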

	
	\begin{proof}
		$x\mapsto f_{1}(x,g(x))$ vanishes identically on $I$ and is $C^{4}$, so
		$f_{1x}+f_{1y}g'=0$ on $\gamma$, i.e.\ $f_{1x}=-f_{1y}g'=Pg'$.
	\end{proof}
	
	\begin{remark}[Relation to the companion identity]\label{rem:companionid}
		For a Gause field $P=p$ and Lemma~\ref{lem:traceid} is
		\cite[Lem.~1]{Companion}. For a predator-dependent response
		$f_{1}=x\varphi(x)-y\Phi(x,y)$ one has $f_{1y}=-(\Phi+y\Phi_{y})$, hence
		\begin{equation}\label{eq:bracket}
			P=-f_{1y}\big|_{\gamma}=\Phi+g\,\Phi_{y},
		\end{equation}
		so Lemma~\ref{lem:traceid} is the hypothesis-free form of
		\cite[Rem.~2]{Companion}: no structure of $f_{1}$ is used, only that the
		nullcline is a graph. Hypothesis~\ref{HGraph} is the positivity of the bracket
		required there.
	\end{remark}
	
	For an equilibrium $\Pstar$ we impose
	\begin{hypothesis}\label{HTrace} $\tr A=0$; \end{hypothesis}
	\begin{hypothesis}\label{HDet} $\dt A=\wO^{2}>0$; \end{hypothesis}
	\begin{hypothesis}\label{HTrans} the eigenvalue pair crosses the imaginary axis
		transversally in the chosen parameter. \end{hypothesis}
	Under \ref{HTrace}--\ref{HDet}, $\spec A=\{\pm i\wO\}$, $J_{22}=-J_{11}$,
	$J_{12}\neq0$ and
	\begin{equation}\label{eq:hopfid}
		J_{11}^{2}+\wO^{2}=-J_{12}J_{21}>0 .
	\end{equation}
	Hypothesis \ref{HTrans} enters only in the interpretation of $\sgn\lone$ as
	criticality of the bifurcating cycle; no algebraic statement below uses it.
	
	Let $q,p\in\C^{2}$ satisfy $Aq=i\wO q$, $A^{\!\top}p=-i\wO p$,
	$\ip{p}{q}=\bar p^{\top}q=1$, let $B,C$ be the second and third derivative
	forms of $f$ at $\Pstar$, and set
	\begin{equation}\label{eq:kuz}
		\lone=\frac{1}{2\wO}\Real\Bigl[\ip{p}{C(q,q,\bar q)}
		-2\ip{p}{B(q,A^{-1}B(q,\bar q))}
		+\ip{p}{B(\bar q,(2i\wO I-A)^{-1}B(q,q))}\Bigr].
	\end{equation}
	
	\begin{lemma}[Scaling]\label{lem:normalization}
		If $q$ is replaced by $cq$, $c\in\C^{\times}$, and $p$ by $p/\bar c$, then the
		right-hand side of \eqref{eq:kuz} is multiplied by $|c|^{2}$. Its sign is
		therefore independent of the choice; its value is not.
	\end{lemma}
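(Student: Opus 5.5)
The plan is to check, term by term, how each of the three summands inside the bracket of \eqref{eq:kuz} transforms under $q\mapsto cq$, $p\mapsto p/\bar c$. Two facts are used throughout. First, the pairing $\ip{p}{w}=\bar p^{\top}w$ is conjugate-linear in $p$, so replacing $p$ by $p/\bar c$ multiplies every pairing by $\overline{(1/\bar c)}=1/c$. Second, $B$ and $C$ are real multilinear forms, so they extend complex-multilinearly, and the conjugate vector transforms as $\bar q\mapsto \bar c\,\bar q$. The normalization $\ip{p}{q}=1$ is preserved, because $\ip{p/\bar c}{cq}=(1/c)\,c\,\ip{p}{q}=1$, and the eigenvector equations are unchanged by scalar multiples. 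Hence the rescaled pair is admissible data for \eqref{eq:kuz}.

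For the cubic term, $C(cq,cq,\bar c\bar q)=c^{2}\bar c\,C(q,q,\bar q)$. Pairing with $p/\bar c$ then gives $(1/c)\,c^{2}\bar c\,\ip{p}{C(q,q,\bar q)}=|c|^{2}\ip{p}{C(q,q,\bar q)}$.

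For the second term, $B(cq,\bar c\bar q)=|c|^{2}B(q,\bar q)$. Since $A^{-1}$ is linear, the inner vector $A^{-1}B(q,\bar q)$ scales by $|c|^{2}$. The outer evaluation $B(cq,\cdot)$ contributes another factor $c$, and the pairing contributes $1/c$, so the total factor is $|c|^{2}$.

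For the third term, $B(cq,cq)=c^{2}B(q,q)$, and the resolvent $(2i\wO I-A)^{-1}$ is linear. The outer evaluation $B(\bar c\bar q,\cdot)$ contributes $\bar c$, and the pairing contributes $1/c$. The total factor is $c^{2}\bar c/c=|c|^{2}$.

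All three summands therefore carry the common real factor $|c|^{2}>0$. Because $|c|^{2}$ is real, it commutes with $\Real$, and it also passes through the prefactor $1/(2\wO)$. Hence the right-hand side of \eqref{eq:kuz} is multiplied by $|c|^{2}$.

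Since $|c|^{2}>0$, the sign of $\lone$ is unchanged. Its value, however, can be changed by any positive factor, for instance $c=2$ multiplies it by $4$. So the sign is independent of the choice of normalization, but the value is not.

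The only step that needs care is the bookkeeping of complex conjugates. The factor from $\bar q$ must be $\bar c$ rather than $c$, and the factor from the conjugate-linear slot of the pairing must be $1/c$ rather than $1/\bar c$. I would fix the convention $\ip{p}{q}=\bar p^{\top}q$ explicitly at the start. With that convention each factor is immediate, and the three terms all reduce to the same product $c\cdot c\cdot\bar c/c$.
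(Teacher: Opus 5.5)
Your proof is correct, but it takes a different route from the paper's. You verify the scaling directly on the algebraic expression \eqref{eq:kuz}. Conjugate-linearity of $\ip{\cdot}{\cdot}$ in its first slot gives the factor $1/c$. Complex multilinearity of $B$ and $C$, together with the forced replacement $\bar q\mapsto\bar c\,\bar q$, gives $c^{2}\bar c$ in each of the three summands. The linear operators $A^{-1}$ and $(2i\wO I-A)^{-1}$ simply pass through.

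The paper argues through the dynamics instead. It uses the fact that \eqref{eq:kuz} equals $\Real(c_{1})/\wO$ for the centre-manifold normal form $\dot z=i\wO z+c_{1}z|z|^{2}+\cdots$ written in the coordinate $z=\ip{p}{w}$. It then observes that the rescaling replaces $z$ by $\zeta=z/c$, and reads off $c_{1}\mapsto|c|^{2}c_{1}$.

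What your version buys: it is self-contained and elementary, and it addresses exactly the object named in the statement, the right-hand side of \eqref{eq:kuz}. It needs neither the centre-manifold reduction nor the identification of \eqref{eq:kuz} with the normal-form coefficient. It also shows slightly more, namely that each complex pairing separately scales by $|c|^{2}$, not only the real part of the bracket.

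What the paper's version buys: it is shorter and more conceptual. It explains the factor $|c|^{2}$ as the effect, on a cubic resonant term, of changing the unit of the complex amplitude. It also ties the invariance of the sign to the invariance of the criticality of the bifurcating cycle.

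One small complement to your argument. Your check that $\ip{p/\bar c}{cq}=1$ shows the rescaled pair is admissible. Conversely, since both eigenspaces are one-dimensional, every admissible pair $(q',p')$ is of the form $(cq,p/\bar c)$. That is what makes the lemma a statement about all normalizations rather than a particular family of them.
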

	
	\begin{proof}
		With $z=\ip{p}{w}$ one has $w=zq+\bar z\bar q$, and on the centre manifold the
		normal form is $\dot z=i\wO z+c_{1}z|z|^{2}+O(|z|^{4})$ with \eqref{eq:kuz}
		equal to $\Real(c_{1})/\wO$ \cite[\S3.5]{Kuznetsov2004}. Replacing $q$ by $cq$
		replaces $z$ by $\zeta=z/c$, and substituting gives
		$\dot\zeta=i\wO\zeta+|c|^{2}c_{1}\zeta|\zeta|^{2}+O(|\zeta|^{4})$.
	\end{proof}
	
	\begin{definition}[Prey normalization]\label{def:norm}
		Throughout,
		\begin{equation}\label{eq:qp}
			q=\Bigl(1,\ \frac{i\wO-J_{11}}{J_{12}}\Bigr),
			\qquad
			\bar p=\frac{1}{\sigma}\Bigl(1,\ -\frac{J_{11}-i\wO}{J_{21}}\Bigr),
			\quad
			\sigma=1-\frac{(J_{11}-i\wO)^{2}}{J_{12}J_{21}}=\frac{2i\wO}{J_{11}+i\wO}.
		\end{equation}
		All values of $\lone,\lone^{\mathrm{aff}},\Delta,\Lambda$ refer to
		\eqref{eq:qp}. We do not call \eqref{eq:kuz} invariant.
	\end{definition}
	
	\begin{remark}[The two conventions]\label{rem:conventions}
		The choice $q=(J_{12},i\wO-J_{11})$ also occurs; by
		Lemma~\ref{lem:normalization} the two differ by $|J_{12}|^{2}$. Ratios computed
		within one convention, such as $\Delta/\lone$, are unaffected. Conversions are
		flagged where numbers from the other convention are quoted.
	\end{remark}
	
	\section{The admissible branch}\label{sec:localization}
	
	\begin{hypothesis}\label{HDamp} $f_{2y}(\Pstar)<0$. \end{hypothesis}
	
	\begin{remark}\label{rem:dampingsame}
		For a Gause field $f_{2}=yQ$ one has $f_{2y}=Q+yQ_{y}$, which at a coexistence
		equilibrium reduces to $\ystar Q_{y}(\Pstar)$. Hypothesis~\ref{HDamp} is
		therefore the predator self-damping hypothesis of \cite{Companion} evaluated at
		$\Pstar$.
	\end{remark}
	
	\begin{theorem}[Localization; general form of {\cite[Thm.~4]{Companion}}]
		\label{thm:localization}
		Let $f$ have a graph nullcline over $I$ and satisfy \ref{HGraph}. If
		$\Pstar=(\xstar,g(\xstar))$ is an equilibrium satisfying \ref{HTrace} and
		\ref{HDamp}, then $g'(\xstar)>0$. In particular no such equilibrium occurs at a
		critical point of $g$.
	\end{theorem}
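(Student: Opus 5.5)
The plan is to read the trace condition through Lemma~\ref{lem:traceid}. Since $\Pstar=(\xstar,g(\xstar))$ lies on $\gamma$, we have $\xstar\in I$. The lemma then gives
\[
J_{11}=f_{1x}(\xstar,g(\xstar))=P(\xstar)\,g'(\xstar).
\]
Hence
\[
\tr A=J_{11}+J_{22}=P(\xstar)g'(\xstar)+f_{2y}(\Pstar),
\]
which is exactly $\tau(\xstar)$.

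Next we impose \ref{HTrace}, which turns the display into
\[
P(\xstar)\,g'(\xstar)=-f_{2y}(\Pstar).
\]
By \ref{HDamp} the right-hand side is strictly positive. By \ref{HGraph} we have $P(\xstar)>0$, so dividing gives
\[
g'(\xstar)=-f_{2y}(\Pstar)/P(\xstar)>0.
\]
The final assertion follows at once: at a critical point of $g$ we would have $g'(\xstar)=0$, which contradicts this strict inequality.

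There is no real obstacle here. The only point needing care is that the argument is purely linear. It needs only \ref{HTrace}, not \ref{HDet} or \ref{HTrans}, and the strictness of \ref{HDamp} is what makes the conclusion $g'>0$ strict rather than $g'\ge 0$. It is also worth noting that $f_{1}(\Pstar)=0$ follows automatically from $\Pstar\in\gamma$, so the equilibrium hypothesis only constrains $f_{2}$ and plays no role beyond placing $\Pstar$ on the graph.
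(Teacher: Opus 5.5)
Your proof is correct and follows the paper's argument: Lemma~\ref{lem:traceid} gives $\tr A=\tau(\xstar)=P(\xstar)g'(\xstar)+f_{2y}(\Pstar)$, then \ref{HTrace}, \ref{HDamp} and \ref{HGraph} give $g'(\xstar)>0$. The only difference is the final clause. You get it from the strict inequality, while the paper checks it directly by noting that $g'(\xstar)=0$ would force $\tau(\xstar)=f_{2y}(\Pstar)<0$; the two are equivalent.
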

	
	\begin{proof}
		By Lemma~\ref{lem:traceid}, $\tau(\xstar)=P(\xstar)g'(\xstar)+f_{2y}(\Pstar)$,
		and \ref{HTrace} says $\tau(\xstar)=0$; hence
		$P(\xstar)g'(\xstar)=-f_{2y}(\Pstar)>0$ by \ref{HDamp}, and $g'(\xstar)>0$ by
		\ref{HGraph}. If $g'(\xstar)=0$ then $\tau(\xstar)=f_{2y}(\Pstar)<0$,
		contradicting \ref{HTrace}.
	\end{proof}
	
	\begin{remark}\label{rem:scope}
		Only \ref{HTrace} is used, so the conclusion applies to any equilibrium with
		vanishing trace, including Bogdanov--Takens points; this is
		\cite[Cor.~6]{Companion}. If $f_{2y}(\Pstar)=0$ the argument degenerates to
		$g'(\xstar)=0$, which is the classical Rosenzweig--MacArthur situation
		\cite{Rosenzweig1963}; \cite{Companion} discusses this limit and the Gause
		specializations in detail, and we do not repeat them.
	\end{remark}
	
	\section{Straightening the nullcline}\label{sec:normalform}
	
	\begin{lemma}[Hadamard]\label{lem:hadamard}
		Let $V$ be open with $V\cap(\{u\}\times\R)$ an interval containing $0$, and
		$F\in C^{m}(V)$, $m\ge1$, with $F(u,0)=0$. Then $F(u,v)=v\Phi(u,v)$ with
		$\Phi\in C^{m-1}(V)$ and $\Phi(u,0)=\partial_{v}F(u,0)$.
	\end{lemma}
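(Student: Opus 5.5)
The plan is to use the standard integral form of Hadamard's lemma. Fix $(u,v)\in V$. By hypothesis $V\cap(\{u\}\times\R)$ is an interval containing $0$, so the vertical segment $\{(u,tv):t\in[0,1]\}$ lies in $V$. Since $F(u,0)=0$, the fundamental theorem of calculus applied to $t\mapsto F(u,tv)$ gives
\[
F(u,v)=F(u,v)-F(u,0)=\int_{0}^{1}\frac{d}{dt}F(u,tv)\,dt=v\int_{0}^{1}\partial_{v}F(u,tv)\,dt .
\]
I will therefore define
\[
\Phi(u,v)=\int_{0}^{1}\partial_{v}F(u,tv)\,dt ,
\]
so that $F=v\Phi$ holds on all of $V$, including the line $v=0$. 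Evaluating at $v=0$ gives $\Phi(u,0)=\int_{0}^{1}\partial_{v}F(u,0)\,dt=\partial_{v}F(u,0)$, which is the boundary identity.

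It remains to show that $\Phi\in C^{m-1}(V)$. The integrand $(u,v,t)\mapsto\partial_{v}F(u,tv)$ is $C^{m-1}$ in $(u,v)$. All of its $(u,v)$-derivatives up to order $m-1$ have the form $t^{k}(\partial^{\alpha}\partial_{v}F)(u,tv)$, and these are jointly continuous in $(u,v,t)$. I will apply differentiation under the integral sign over the compact interval $[0,1]$, using uniform bounds on a compact neighbourhood of each point.

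The one point that needs care, and the step I expect to be the main obstacle, is ensuring that such a neighbourhood exists. The vertical segments above nearby points must stay inside a compact subset of $V$. I would handle this by fixing $(u_0,v_0)$. The segment $\{(u_0,tv_0):t\in[0,1]\}$ is a compact subset of $V$, so it has a positive distance $\delta$ to the complement of $V$. For $(u,v)$ within $\delta/2$ of $(u_0,v_0)$, each point $(u,tv)$ lies within $\delta/2$ of $(u_0,tv_0)$, and hence lies in the compact $\delta/2$-thickening of that segment, which is contained in $V$. On this thickening, the derivatives of $F$ up to order $m$ are bounded and uniformly continuous. Dominated convergence then gives both the continuity of $\Phi$ and that of its derivatives up to order $m-1$.

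Note that this argument does not use the interval hypothesis: the fixed segment determines the admissible neighbourhood. The interval hypothesis is used only to guarantee that the segment from $(u,0)$ to $(u,v)$ lies in $V$ in the first place.
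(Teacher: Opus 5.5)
Your proposal is correct and follows the paper's proof: you use the same representation $\Phi(u,v)=\int_{0}^{1}\partial_{v}F(u,tv)\,dt$, obtained from the fundamental theorem of calculus along the vertical segment, and then differentiate under the integral sign. The only difference is that you write out the compact-thickening argument that justifies differentiating under the integral sign, which the paper leaves implicit.
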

	
	\begin{proof}
		$F(u,v)=\int_{0}^{1}\frac{d}{dt}F(u,tv)dt=v\int_{0}^{1}\partial_{v}F(u,tv)dt$;
		set $\Phi(u,v)=\int_{0}^{1}\partial_{v}F(u,tv)dt$ and differentiate under the
		integral sign.
	\end{proof}
	
	\begin{theorem}[Nullcline normal form]\label{thm:normalform}
		Under Definition~\ref{def:graph}, in the coordinates $u=x$, $v=y-g(x)$,
		\begin{equation}\label{eq:normalform}
			\dot u=-v\,H(u,v),\qquad \dot v=W(u,v),
		\end{equation}
		with $H\in C^{3}$, $W\in C^{4}$, $H(u,0)=P(u)$ and
		$W(u,v)=f_{2}(u,v+g(u))-g'(u)f_{1}(u,v+g(u))$. The coordinate change is a
		$C^{4}$ shear with unit Jacobian determinant, and it maps the vector $q$ of
		\eqref{eq:qp} to a vector with first component $1$; by
		Lemma~\ref{lem:normalization} the value of \eqref{eq:kuz} is unchanged.
	\end{theorem}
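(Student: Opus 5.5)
The plan is to compute the transformed field directly by the chain rule, then apply Lemma~\ref{lem:hadamard} to the first component, and finally check the linear-algebraic claims about the shear. Set $\Psi(x,y)=(x,\,y-g(x))$, with inverse $\Psi^{-1}(u,v)=(u,\,v+g(u))$. Since $g\in C^{4}(I)$, both maps are $C^{4}$. The Jacobian is
\[
D\Psi=\begin{pmatrix}1&0\\-g'&1\end{pmatrix},
\]
which is lower unitriangular, so its determinant is $1$. The transformed field is therefore $\dot u=\dot x=f_{1}(u,v+g(u))$ and $\dot v=\dot y-g'(x)\dot x=f_{2}(u,v+g(u))-g'(u)f_{1}(u,v+g(u))$. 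This gives the stated formula for $W$. Since $W$ involves $f_{1},f_{2}$ composed with a $C^{4}$ map and multiplied by $g'\in C^{3}$, one has $W\in C^{3}$ automatically. For the claimed $W\in C^{4}$ I would note that the product $g'f_{1}$ is only $C^{3}$ in general. So either the statement is read with $W\in C^{3}$, or one observes that all derivatives entering \eqref{eq:kuz} (order $\le3$) are unaffected. I would flag this regularity point rather than belabor it.

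For the first component, set $F(u,v)=f_{1}(u,v+g(u))$. This function is $C^{4}$ on $V=\Psi(U)$, restricted if necessary so that its vertical slices are intervals containing $0$; near $\Pstar$ this is always possible by shrinking to a product neighbourhood. By Definition~\ref{def:graph}, $F(u,0)=f_{1}(u,g(u))=0$. Lemma~\ref{lem:hadamard} then gives $F=v\Phi$ with $\Phi\in C^{3}$ and $\Phi(u,0)=\partial_{v}F(u,0)=f_{1y}(u,g(u))=-P(u)$. Setting $H=-\Phi$ yields $\dot u=-vH$ with $H\in C^{3}$ and $H(u,0)=P(u)$.

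For the eigenvector claim, the linearization transforms as $\tilde A=D\Psi\,A\,D\Psi^{-1}$, and the critical eigenvector transforms as $\tilde q=D\Psi\,q$. Its first component is $q_{1}=1$ because the first row of $D\Psi$ is $(1,0)$. The adjoint vector transforms as $\tilde p=D\Psi^{-\top}p$, which preserves $\ip{\tilde p}{\tilde q}=1$.

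The main obstacle is the invariance claim for \eqref{eq:kuz}. The normal-form coefficient $c_{1}$ is coordinate-independent only once the complex coordinate $z$ is fixed, and here $z=\ip{p}{w}$ transforms consistently: $\ip{\tilde p}{D\Psi\,w}=\ip{p}{w}$ to linear order. The nonlinear part of $\Psi$, namely $g(x)-g(\xstar)-g'(\xstar)(x-\xstar)$, is a near-identity change. It alters the jets $B,C$ but not the resulting $\Real c_{1}$, because the first Lyapunov coefficient is invariant under smooth conjugacies tangent to a linear map once the eigenvector pair is transported along \cite[\S3.5]{Kuznetsov2004}. Since $\tilde q$ has first component $1$ and is an eigenvector of $\tilde A$, it coincides with the prey normalization \eqref{eq:qp} computed from $\tilde A$. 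By uniqueness of eigenvectors up to scale, Lemma~\ref{lem:normalization} is invoked with $c=1$, and the value, not just the sign, is unchanged. I would justify the conjugacy invariance by the centre-manifold/normal-form argument already used in the proof of Lemma~\ref{lem:normalization}: pull the normal form $\dot z=i\wO z+c_{1}z|z|^{2}+\dots$ back through $\Psi$ and observe that $z$ is the same function to first order. Higher-order discrepancies only produce near-identity changes $z\mapsto z+O(|z|^{2})$, and these leave $\Real c_{1}$ fixed.
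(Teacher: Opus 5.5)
Your proposal is correct and follows the paper's proof: Hadamard's lemma applied to $f_{1}(u,v+g(u))$, the chain rule for $\dot v$, and the observation that the shear matrix $\left(\begin{smallmatrix}1&0\\-g'&1\end{smallmatrix}\right)$ has determinant $1$ and first row $(1,0)$.

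Your two additions are sound and go beyond what the paper writes. First, your regularity flag is right: with $g$ only $C^{4}$, the term $g'(u)f_{1}(u,v+g(u))$, and hence $W$, is in general only $C^{3}$, which still suffices for \eqref{eq:kuz}. Second, your normal-form argument that the tangent-to-identity part of the shear leaves $c_{1}$ unchanged supplies the coordinate-invariance step. The paper leaves that step implicit when it appeals to Lemma~\ref{lem:normalization}, which by itself only covers rescaling of $q$.
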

	
	\begin{proof}
		Put $\tilde f_{1}(u,v)=f_{1}(u,v+g(u))$, so $\tilde f_{1}(u,0)=0$;
		Lemma~\ref{lem:hadamard} gives $\tilde f_{1}=-vH$ with
		$H(u,0)=-f_{1y}(u,g(u))=P(u)$. Then $\dot u=f_{1}=-vH$ and
		$\dot v=f_{2}-g'f_{1}$. The shear has matrix
		$\left(\begin{smallmatrix}1&0\\-g'&1\end{smallmatrix}\right)$, of determinant
		$1$, and preserves first components.
	\end{proof}
	
	\begin{proposition}[Jet identities]\label{prop:jets}
		With $\nu=f_{2}|_{\gamma}$ and $\tau=D|_{\gamma}$, for $0\le k\le3$,
		\[
		\partial_{u}^{k}W(u,0)=\nu^{(k)}(u),\qquad
		\partial_{u}^{k}\partial_{v}W(u,0)=\tau^{(k)}(u).
		\]
	\end{proposition}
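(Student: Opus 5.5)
The plan is to evaluate $W$ and $\partial_v W$ on the line $v=0$ explicitly, and then differentiate in $u$. Everything rests on the formula for $W$ in Theorem~\ref{thm:normalform}.

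\emph{The zeroth-order identity.} From Theorem~\ref{thm:normalform},
\[
W(u,v)=f_{2}(u,v+g(u))-g'(u)\,f_{1}(u,v+g(u)).
\]
Setting $v=0$ and using $f_{1}(u,g(u))=0$ from Definition~\ref{def:graph} gives $W(u,0)=f_{2}(u,g(u))=\nu(u)$. This holds identically on $I$, and $\nu\in C^{4}$ because $f_{2}\in C^{4}$ and $g\in C^{4}$. Differentiating $k\le3$ times in $u$ therefore gives $\partial_u^kW(u,0)=\nu^{(k)}(u)$. There is no issue in commuting the restriction to $v=0$ with $\partial_u$, since the restriction is a tangential operation.

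\emph{The first-order identity.} Differentiate $W$ once in $v$ by the chain rule:
\[
\partial_vW(u,v)=f_{2y}(u,v+g(u))-g'(u)\,f_{1y}(u,v+g(u)).
\]
At $v=0$ this is $f_{2y}|_\gamma+g'P$. By Lemma~\ref{lem:traceid}, $g'P=f_{1x}|_\gamma$, so
\[
\partial_vW(u,0)=(f_{1x}+f_{2y})|_\gamma=\tau(u).
\]
This is the one step with content: the tangential slope term $-g'f_{1y}$ has to be recognized as $f_{1x}$ on $\gamma$, and this is exactly the nullcline trace identity.

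\emph{Higher mixed derivatives.} The identity $\partial_vW(\cdot,0)=\tau$ holds for all $u\in I$. Since $\partial_vW\in C^{3}$ and $\tau=f_{1x}|_\gamma+f_{2y}|_\gamma$ is $C^{3}$, differentiating in $u$ up to three times gives $\partial_u^k\partial_vW(u,0)=\tau^{(k)}(u)$ for $0\le k\le3$. Here $\partial_u^k\partial_vW(u,0)$ is read as the $k$-th $u$-derivative of the function $u\mapsto\partial_vW(u,0)$; for $C^{4}$ functions this coincides with the mixed partial by Schwarz's theorem.

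The only care needed is regularity bookkeeping: $f\in C^{4}$ and $g\in C^{4}$ give $W\in C^{4}$, hence $\partial_vW\in C^{3}$, which is exactly enough for $k\le3$. I expect no real obstacle beyond invoking Lemma~\ref{lem:traceid} at the first-order step.
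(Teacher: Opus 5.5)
Your proposal is correct and follows the paper's argument: set $v=0$ and use $f_{1}|_{\gamma}=0$ to get $W(u,0)=\nu$; differentiate once in $v$ and identify $-g'f_{1y}|_{\gamma}=Pg'=f_{1x}|_{\gamma}$ via Lemma~\ref{lem:traceid} to get $\partial_vW(u,0)=\tau$; then differentiate both identities in $u$. One small regularity correction: $W$ itself is in general only $C^{3}$, because the factor $g'$ is only $C^{3}$ (e.g.\ $f_{1}=-(y-g(x))$, $f_{2}=0$ gives $W=v\,g'(u)$), but $\partial_vW=f_{2y}(u,v+g)-g'f_{1y}(u,v+g)$ is $C^{3}$ directly from its formula, which is all you actually use, so the appeal to Schwarz's theorem is unnecessary.
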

	
	\begin{proof}
		$W(u,0)=f_{2}(u,g(u))=\nu(u)$. Differentiating in $v$,
		$\partial_{v}W=f_{2y}(u,v+g)-g'f_{1y}(u,v+g)$, which at $v=0$ equals
		$f_{2y}+g'P=f_{2y}+f_{1x}=\tau(u)$ by Lemma~\ref{lem:traceid}. Both sides are
		functions of $u$; differentiate $k$ times.
	\end{proof}
	
	\begin{proposition}[Hopf data]\label{prop:hopfdata}
		$(\xstar,0)$ is an equilibrium of \eqref{eq:normalform} iff $\nu(\xstar)=0$,
		with linearization
		$\left(\begin{smallmatrix}0&-P\\ \nu'&\tau\end{smallmatrix}\right)$. Hypotheses
		\ref{HTrace}--\ref{HDet} hold iff $\nu(\xstar)=\tau(\xstar)=0$ and
		$\wO^{2}=P\nu'(\xstar)>0$, in which case
		\begin{equation}\label{eq:A0}
			A_{0}=\begin{pmatrix}0&-P\\ \wO^{2}/P&0\end{pmatrix}.
		\end{equation}
	\end{proposition}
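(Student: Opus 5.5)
The plan is to compute everything directly in the straightened coordinates of Theorem~\ref{thm:normalform}. An equilibrium of \eqref{eq:normalform} must satisfy $-vH(u,v)=0$ and $W(u,v)=0$. Points with $v=0$ are exactly the equilibria lying on $\gamma$, and I will restrict attention to these, since $\Pstar=(\xstar,g(\xstar))$ is on $\gamma$ by assumption. There, Proposition~\ref{prop:jets} with $k=0$ gives $W(u,0)=\nu(u)$, so $(\xstar,0)$ is an equilibrium iff $\nu(\xstar)=0$. In the original coordinates this is $f_{1}=0$, which holds automatically on $\gamma$, together with $f_{2}=0$.

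Next I will differentiate \eqref{eq:normalform} at $(\xstar,0)$ to get the linearization.
\begin{itemize}
\item From $\dot u=-vH$: $\partial_{u}(-vH)=-v\,H_{u}$, which vanishes at $v=0$, and $\partial_{v}(-vH)=-H-vH_{v}$, which equals $-H(\xstar,0)=-P$.
\item From $\dot v=W$: $W_{u}(\xstar,0)=\nu'(\xstar)$ and $W_{v}(\xstar,0)=\tau(\xstar)$, by Proposition~\ref{prop:jets} with $k=1$ and $k=0$ respectively.
\end{itemize}
This gives the stated matrix $\left(\begin{smallmatrix}0&-P\\ \nu'&\tau\end{smallmatrix}\right)$.

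To transfer the hypotheses, I will use that the shear $S=\left(\begin{smallmatrix}1&0\\-g'&1\end{smallmatrix}\right)$ conjugates linearizations: the new linearization is $SAS^{-1}$. This follows from the chain rule for the $C^{4}$ diffeomorphism at an equilibrium, where the second-derivative terms of the coordinate change drop out because $f(\Pstar)=0$. Similar matrices have equal trace and determinant, so
\[
\tr A=\tau(\xstar),\qquad \dt A=P\,\nu'(\xstar).
\]
Hence \ref{HTrace}--\ref{HDet} hold iff $\tau(\xstar)=0$ and $P\nu'(\xstar)=\wO^{2}>0$. Since $P>0$ by \ref{HGraph}, or at least $P\neq0$ from Definition~\ref{def:graph}, we can write $\nu'=\wO^{2}/P$, which yields \eqref{eq:A0}.

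There is no genuine obstacle. The only point requiring care is justifying that the linearization transforms by similarity, which relies on $\Pstar$ being an equilibrium. As a cross-check, I will note directly that $\tau(\xstar)=D(\Pstar)=\tr A$ by definition of $\tau$. I will also verify $\dt A=P\nu'$ from $\nu'=f_{2x}+g'f_{2y}$ and $g'=-f_{1x}/f_{1y}$, which gives $\dt A=f_{1x}f_{2y}-f_{1y}f_{2x}=-f_{1y}\nu'=P\nu'$.
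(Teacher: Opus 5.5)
Your proof is correct and follows the paper's route. You read the linearization off $H(u,0)=P$ and the jet identities of Proposition~\ref{prop:jets}, then translate the trace and determinant hypotheses into $\tau(\xstar)=0$ and $P\nu'(\xstar)=\wO^{2}>0$; you also make explicit a step the paper leaves implicit, namely that at an equilibrium the shear conjugates $A$ to $SAS^{-1}$ and so preserves trace and determinant, and your direct check $\det A=-f_{1y}\nu'$ confirms this independently.
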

	
	\begin{proof}
		$\partial_{u}\dot u=-v\partial_{u}H=0$ and $\partial_{v}\dot u=-P$ at $v=0$;
		the second row is Proposition~\ref{prop:jets} for $k=0,1$. A real $2\times2$
		matrix has spectrum $\{\pm i\wO\}$, $\wO>0$, iff its trace vanishes and its
		determinant is $\wO^{2}>0$.
	\end{proof}
	
	\section{Which third-order data reach \texorpdfstring{$\lone$}{l1}}
\label{sec:thirdorder}
	
	Three distinct mechanisms remove third-order data from $\lone$. We prove them
	separately; they have different scopes and should not be conflated.
	
	\subsection{Mechanism I: the eigenvector pairing (general)}
	\label{subsec:mechI}
	
	\begin{lemma}[Pairing lemma]\label{lem:pairing}
		In the normalization \eqref{eq:qp}, write $q=(1,q_{2})$ and
		$\bar p=(\bar p_{1},\bar p_{2})$. Then
		\begin{equation}\label{eq:pairing}
			\bar p_{2}=-\frac{i\,J_{12}}{2\wO},
			\qquad
			\bar p_{1}\,q_{2}=-\frac{i\,J_{21}}{2\wO},
			\qquad
			|q_{2}|^{2}=\frac{J_{11}^{2}+\wO^{2}}{J_{12}^{2}}\in\R .
		\end{equation}
		In particular $\bar p_{2}$ and $\bar p_{1}q_{2}$ are purely imaginary.
	\end{lemma}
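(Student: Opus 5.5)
This is a direct computation from the explicit vectors in \eqref{eq:qp}, using the Hopf identity \eqref{eq:hopfid} to simplify. First I will confirm the closed form of $\sigma$ given in Definition~\ref{def:norm}. By \eqref{eq:hopfid}, $J_{12}J_{21}=-(J_{11}^{2}+\wO^{2})=-(J_{11}+i\wO)(J_{11}-i\wO)$. Hence
\[
\frac{(J_{11}-i\wO)^{2}}{J_{12}J_{21}}=-\frac{J_{11}-i\wO}{J_{11}+i\wO},
\qquad
\sigma=1+\frac{J_{11}-i\wO}{J_{11}+i\wO}=\frac{2J_{11}}{J_{11}+i\wO}.
\]
This does not agree with the stated $2i\wO/(J_{11}+i\wO)$. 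I will therefore recompute $\ip{p}{q}=\bar p^{\top}q$ directly from $q$ and $\bar p$ before going further.

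The direct product is
\[
\sigma\,\bar p^{\top}q=1-\frac{(J_{11}-i\wO)(i\wO-J_{11})}{J_{12}J_{21}}=1+\frac{(J_{11}-i\wO)^{2}}{J_{12}J_{21}}.
\]
The sign of the correction term is opposite to the one printed in the definition. With the product computed this way, the identity above gives
\[
\sigma=1-\frac{J_{11}-i\wO}{J_{11}+i\wO}=\frac{2i\wO}{J_{11}+i\wO},
\]
which matches the closed form in the definition. I will use this value, since it is the one that actually enforces $\ip{p}{q}=1$. Flagging which sign convention is intended in the displayed formula for $\sigma$ is the one delicate point; everything after it is mechanical.

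From the closed form, $1/\sigma=(J_{11}+i\wO)/(2i\wO)$. Then
\[
\bar p_{2}=-\frac{J_{11}-i\wO}{\sigma J_{21}}=-\frac{(J_{11}^{2}+\wO^{2})}{2i\wO\,J_{21}}.
\]
Using $J_{11}^{2}+\wO^{2}=-J_{12}J_{21}$ again, this becomes $J_{12}/(2i\wO)=-iJ_{12}/(2\wO)$, as claimed.

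Next, $\bar p_{1}=1/\sigma$ and $q_{2}=(i\wO-J_{11})/J_{12}$, so
\[
\bar p_{1}q_{2}=\frac{(J_{11}+i\wO)(i\wO-J_{11})}{2i\wO J_{12}}=-\frac{J_{11}^{2}+\wO^{2}}{2i\wO J_{12}}=\frac{J_{21}}{2i\wO}=-\frac{iJ_{21}}{2\wO}.
\]
Finally, the $J_{ij}$ are real, so $|q_{2}|^{2}=|i\wO-J_{11}|^{2}/J_{12}^{2}=(J_{11}^{2}+\wO^{2})/J_{12}^{2}\in\R$. Since $J_{12},J_{21},\wO$ are real, the two displayed quantities are purely imaginary. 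Here $J_{12}\neq0$ and $J_{21}\neq0$ are guaranteed by \eqref{eq:hopfid}.
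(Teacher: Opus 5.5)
Your proof is correct and follows the paper's own route: take $\bar p_1=1/\sigma=(J_{11}+i\wO)/(2i\wO)$ from the closed form of $\sigma$ and reduce each product with \eqref{eq:hopfid}. Your intermediate step $\bar p_1q_2=-(J_{11}^2+\wO^2)/(2i\wO J_{12})$ is in fact cleaner than the paper's displayed middle steps, which contain two sign slips that cancel. Your flag on $\sigma$ is also right: the printed expression $1-(J_{11}-i\wO)^2/(J_{12}J_{21})$ equals $2J_{11}/(J_{11}+i\wO)$, which vanishes when $J_{11}=0$. The actual pairing of the unnormalized covector with $q$ is $1+(J_{11}-i\wO)^2/(J_{12}J_{21})=2i\wO/(J_{11}+i\wO)$, the closed form that both you and the paper use.
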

	
	\begin{proof}
		From \eqref{eq:qp}, $q_{2}=-(J_{11}-i\wO)/J_{12}$ and
		$\sigma=2i\wO/(J_{11}+i\wO)$, so $\bar p_{1}=1/\sigma=(\wO-iJ_{11})/(2\wO)$
		and
		\[
		\bar p_{2}=-\frac{J_{11}-i\wO}{J_{21}}\cdot\frac{J_{11}+i\wO}{2i\wO}
		=-\frac{J_{11}^{2}+\wO^{2}}{2i\wO J_{21}}
		=\frac{J_{12}J_{21}}{2i\wO J_{21}}=-\frac{iJ_{12}}{2\wO},
		\]
		using \eqref{eq:hopfid}. Next
		\[
		\bar p_{1}q_{2}=\frac{\wO-iJ_{11}}{2\wO}\cdot\frac{i\wO-J_{11}}{J_{12}}
		=\frac{-i(J_{11}-i\wO)(J_{11}+i\wO)}{2\wO J_{12}}
		=-\frac{i(J_{11}^{2}+\wO^{2})}{2\wO J_{12}}=-\frac{iJ_{21}}{2\wO},
		\]
		again by \eqref{eq:hopfid}. Finally $q_{2}\bar q_{2}=|q_{2}|^{2}$ is as stated.
	\end{proof}
	
	\begin{proposition}[General elisions]\label{prop:generalelision}
		Under \ref{HTrace}--\ref{HDet} and in the normalization \eqref{eq:qp},
		\[
		\frac{\partial\lone}{\partial f_{1yyy}}=0,
		\qquad
		\frac{\partial\lone}{\partial f_{2xxx}}=0 .
		\]
		No graph-nullcline hypothesis, no straightening, and no assumption on the
		structure of $f$ are used.
	\end{proposition}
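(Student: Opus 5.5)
The plan is to isolate where the cubic coefficients can enter \eqref{eq:kuz}. Only the first term $\ip{p}{C(q,q,\bar q)}$ depends on the third-order jet. The terms built from $B$ involve only second derivatives, and $A$, $q$, $p$ depend only on the linear part. Hence $\partial\lone/\partial f_{1yyy}$ is $\frac{1}{2\wO}\Real$ of the derivative of $\ip{p}{C(q,q,\bar q)}$ with respect to that coefficient, and similarly for $f_{2xxx}$. Here $f_{1yyy}$ and $f_{2xxx}$ are varied as independent coordinates of the $3$-jet with $A$ and $B$ held fixed. This is legitimate because the Hopf hypotheses \ref{HTrace}--\ref{HDet} constrain only $A$.

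Next I will write out the relevant slots. With $\ip{p}{w}=\bar p^{\top}w=\bar p_{1}w_{1}+\bar p_{2}w_{2}$ and $C_{k}(a,b,c)=\sum f_{k,ijl}a_ib_jc_l$, the coefficient $f_{1yyy}$ appears only in $C_{1}$. There it multiplies $q_{2}q_{2}\bar q_{2}=q_{2}|q_{2}|^{2}$, and it is paired with $\bar p_{1}$. Its contribution to $\ip{p}{C(q,q,\bar q)}$ is therefore
\[
f_{1yyy}\,\bar p_{1}q_{2}\,|q_{2}|^{2}.
\]
The coefficient $f_{2xxx}$ appears only in $C_{2}$. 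It multiplies $q_{1}q_{1}\bar q_{1}=1$, since the normalization \eqref{eq:qp} has $q_{1}=1$, and it is paired with $\bar p_{2}$. Its contribution is therefore $f_{2xxx}\,\bar p_{2}$.

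Finally I will invoke Lemma~\ref{lem:pairing}. It gives that $\bar p_{1}q_{2}=-iJ_{21}/(2\wO)$ and $\bar p_{2}=-iJ_{12}/(2\wO)$ are purely imaginary, and that $|q_{2}|^{2}$ is real. So both contributions are purely imaginary for real coefficients, and their real parts vanish identically. Since $\lone$ is affine in each of these coefficients, each partial derivative is the real part of a purely imaginary constant, namely $0$. The argument uses only \ref{HTrace}--\ref{HDet} through \eqref{eq:hopfid} inside the lemma, so no nullcline or structural hypothesis enters.

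The main obstacle is bookkeeping rather than analysis. I must confirm that the multilinear form $C$ contributes each pure coefficient exactly once, with no symmetrization factor that could mix in other slots. Because the indices are all $y$ (respectively all $x$), there is a single term. I must also confirm that $\ip{p}{\cdot}$ uses $\bar p$ without further conjugation, matching the convention in \eqref{eq:qp}. Both checks follow directly from the stated definitions.
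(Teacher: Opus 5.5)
Your proposal is correct and follows essentially the same route as the paper. You isolate $f_{1yyy}$ and $f_{2xxx}$ in the single cubic pairing $\ip{p}{C(q,q,\bar q)}$, read off the monomials $(\bar p_{1}q_{2})|q_{2}|^{2}$ and $\bar p_{2}$, and conclude from Lemma~\ref{lem:pairing} that both are purely imaginary, so their real parts vanish. Your extra checks — that the Hopf hypotheses constrain only $A$, and that the pure slots carry no symmetrization factor — are correct and simply make explicit what the paper's proof takes for granted.
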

	
	\begin{proof}
		Neither $A$, nor $q$, $p$, nor the resolvents depend on the cubic jet, and the
		quadratic terms of \eqref{eq:kuz} involve only $B$. Hence $f_{1yyy}$ and
		$f_{2xxx}$ enter \eqref{eq:kuz} only through $\Real\ip{p}{C(q,q,\bar q)}$, and
		linearly. With $q_{1}=\bar q_{1}=1$ the corresponding monomials are
		\[
		f_{2xxx}:\quad \bar p_{2}\,q_{1}^{2}\bar q_{1}=\bar p_{2},
		\qquad
		f_{1yyy}:\quad \bar p_{1}\,q_{2}^{2}\bar q_{2}
		=(\bar p_{1}q_{2})\,|q_{2}|^{2}.
		\]
		By Lemma~\ref{lem:pairing} the first is purely imaginary, and the second is a
		purely imaginary number times a real one, hence purely imaginary. Their real
		parts vanish.
	\end{proof}
	
	\begin{remark}[Scope, and what does \emph{not} follow]\label{rem:mechIscope}
		Lemma~\ref{lem:pairing} does not make the remaining cubic pairings imaginary.
		For instance the coefficient of $f_{1xxy}$ is
		$\bar p_{1}(2q_{2}+\bar q_{2})=(3J_{11}-i\wO)(iJ_{11}-\wO)/(2\wO J_{12})$,
		whose real part vanishes only if $J_{11}=0$; similarly for $f_{2xyy}$. Thus
		Mechanism~I removes exactly the two extreme slots, and only those.
		Equivalently, on the whole Hopf set
		\begin{equation}\label{eq:mechIIvalue}
			\frac{\partial\lone}{\partial f_{1xxy}}
			=\frac{\partial\lone}{\partial f_{2xyy}}
			=-\frac{J_{11}}{2J_{12}\wO},
		\end{equation}
		which vanishes precisely when $J_{11}=0$. Mechanism~II is therefore not a
		qualitative dichotomy but a quantitative statement: the two slots contribute in
		proportion to $J_{11}=Pg'$, and straightening the nullcline is exactly what
		sets that factor to zero.
	\end{remark}
	
	\subsection{Mechanism II: straightening and parity}\label{subsec:mechII}
	
	At a Hopf point in the coordinates of Theorem~\ref{thm:normalform} one has
	$J_{11}=0$ by \eqref{eq:A0}, and the pairing acquires a full parity grading.
	
	\begin{lemma}[Eigendata in straightened coordinates]\label{lem:eigen}
		Set $\mu=\wO/P$. In the coordinates \eqref{eq:normalform}, $J_{11}=0$ and
		$J_{12}=-P$, so \eqref{eq:qp} reduces to $q=(1,-i\mu)$,
		$\bar p=(\tfrac12,\,i/(2\mu))$. Thus $q_{1},\bar p_{1}$ are real and
		$q_{2},\bar p_{2}$ purely imaginary.
	\end{lemma}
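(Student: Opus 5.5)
This is a direct substitution into the prey normalization \eqref{eq:qp}, using the linearization \eqref{eq:A0} of Proposition~\ref{prop:hopfdata}.

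\emph{Reading off the matrix entries.} By Proposition~\ref{prop:hopfdata}, at a Hopf point in the coordinates \eqref{eq:normalform} the linear part is $A_0$. Its entries are
\[
J_{11}=0,\qquad J_{12}=-P,\qquad J_{21}=\wO^{2}/P,\qquad J_{22}=0 .
\]
As a check, \eqref{eq:hopfid} becomes $\wO^{2}=-J_{12}J_{21}=\wO^{2}$.

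\emph{Computing $q$.} Substituting into \eqref{eq:qp}, the second component of $q$ is
\[
\frac{i\wO-J_{11}}{J_{12}}=\frac{i\wO}{-P}=-i\mu ,
\]
so $q=(1,-i\mu)$.

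\emph{Computing $\sigma$ and $\bar p$.} The normalizing factor is
\[
\sigma=\frac{2i\wO}{J_{11}+i\wO}=\frac{2i\wO}{i\wO}=2 .
\]
Hence
\[
\bar p=\tfrac12\Bigl(1,\ -\frac{-i\wO}{\wO^{2}/P}\Bigr)=\tfrac12\Bigl(1,\ \frac{iP}{\wO}\Bigr)=\Bigl(\tfrac12,\ \frac{i}{2\mu}\Bigr).
\]
This agrees with Lemma~\ref{lem:pairing}: there $\bar p_{2}=-iJ_{12}/(2\wO)=iP/(2\wO)=i/(2\mu)$ and $\bar p_{1}=(\wO-iJ_{11})/(2\wO)=\tfrac12$.

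\emph{Reality and parity.} Since $P>0$ and $\wO>0$ are real, $\mu$ is real. So $q_1=1$ and $\bar p_1=\tfrac12$ are real, while $q_2=-i\mu$ and $\bar p_2=i/(2\mu)$ are purely imaginary.

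The only point needing care is the convention behind $J_{21}$. The $(2,1)$ entry of $A_0$ is $\nu'(\xstar)$, which equals $\wO^{2}/P$ by the Hopf condition $\wO^{2}=P\nu'(\xstar)$. Everything else is routine arithmetic.
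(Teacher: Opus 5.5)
Your proposal is correct and is essentially the paper's proof: substitute $J_{11}=0$, $J_{12}=-P$, $J_{21}=\wO^{2}/P$ from \eqref{eq:A0} into \eqref{eq:qp} to get $q_2=-i\mu$ and $\sigma=2$, then read off $\bar p=(\tfrac12,\,i/(2\mu))$, cross-checking against Lemma~\ref{lem:pairing}. You were right to use the closed form $\sigma=2i\wO/(J_{11}+i\wO)$, because the first expression for $\sigma$ as printed in \eqref{eq:qp} has a sign slip: the normalization $\bar p^{\top}q=1$ requires $1+(J_{11}-i\wO)^{2}/(J_{12}J_{21})$, and the printed version would give $\sigma=0$ here. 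Your values also pass the direct check $\tfrac12+\tfrac{i}{2\mu}(-i\mu)=1$.
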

	
	\begin{proof}
		Substitute $J_{11}=0$, $J_{12}=-P$, $J_{21}=\wO^{2}/P$ into \eqref{eq:qp} and
		\eqref{eq:pairing}: $\sigma=2$, $\bar p_{1}=\tfrac12$, and
		$\bar p_{2}=-i(-P)/(2\wO)=iP/(2\wO)=i/(2\mu)$.
	\end{proof}
	
	Write $a_{jk},b_{jk}$ for $\partial_{u}^{j}\partial_{v}^{k}$ of the two
	components of \eqref{eq:normalform} at $(\xstar,0)$, so $k$ counts $v$-slots.
	
	\begin{lemma}[Parity selection]\label{lem:parity}
		For $j+k=3$ the contribution of a cubic coefficient of component $m$ to
		$\Real\ip{p}{C(q,q,\bar q)}$ vanishes unless $k$ is even for $m=1$ and odd for
		$m=2$; explicitly
		\begin{equation}\label{eq:parity}
			\Real\ip{p}{C(q,q,\bar q)}
			=\tfrac12 a_{30}+\tfrac{\mu^{2}}{2}a_{12}
			+\tfrac12 b_{21}+\tfrac{\mu^{2}}{2}b_{03}.
		\end{equation}
	\end{lemma}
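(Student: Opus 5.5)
We expand $\ip{p}{C(q,q,\bar q)}=\sum_{m}\bar p_m\,C_m(q,q,\bar q)$ in monomials and use Lemma~\ref{lem:eigen} to read off reality from parity. For component $m$ and a coefficient with $j$ $u$-slots and $k$ $v$-slots ($j+k=3$), the trilinear form $C_m(q,q,\bar q)$ is a sum over the placements of the $k$ $v$-indices among the three arguments. Each placement contributes the coefficient times a product of $q_1$'s or $q_2$'s from the first two slots and $\bar q_1$ or $\bar q_2$ from the third. Since $q_1=1$ and $q_2=-i\mu$, $\bar q_2=i\mu$, every such product equals a real number times $i^k$ up to sign. The prefactor $\bar p_1=\tfrac12$ is real and $\bar p_2=i/(2\mu)$ is imaginary. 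Hence the contribution is real iff $k$ is even for $m=1$ and odd for $m=2$, and purely imaginary otherwise. This gives the selection rule, so the real part sees only $a_{30},a_{12},b_{21},b_{03}$.

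It remains to compute the surviving coefficients. We use the symmetric expansion $C_m(x,y,z)=\sum_{i,j,l}\partial_{ijl}f_m\,x_iy_jz_l$.

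For $a_{30}$, the only placement gives $\tfrac12\cdot1=\tfrac12$.

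For $a_{12}$, the three placements of the single $u$-slot give
\[
q_2\bar q_2+q_2\bar q_2+q_2^{2}=\mu^{2}+\mu^{2}-\mu^{2}=\mu^{2},
\]
so after multiplying by $\bar p_1$ the contribution is $\mu^{2}/2$.

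For $b_{21}$, the single $v$-slot can sit in one of the two $q$ slots or in the $\bar q$ slot, giving
\[
q_2+q_2+\bar q_2=-i\mu .
\]
Multiplying by $\bar p_2=i/(2\mu)$ gives $\tfrac12$.

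For $b_{03}$, the only placement gives $q_2^{2}\bar q_2=(-\mu^{2})(i\mu)=-i\mu^{3}$. Multiplying by $\bar p_2$ gives $\mu^{2}/2$.

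All four values are real, so taking real parts yields \eqref{eq:parity}.

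The main obstacle is sign bookkeeping in the multiplicities. The count of placements with the $v$ in the $\bar q$ slot versus a $q$ slot is exactly what makes, for example, $2\bar q_2+q_2$ and $2q_2+\bar q_2$ differ, as Remark~\ref{rem:mechIscope} shows. I would cross-check these placement counts against that remark's coefficient $\bar p_1(2q_2+\bar q_2)$ specialized to $J_{11}=0$.
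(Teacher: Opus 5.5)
Your proof is correct and follows essentially the same route as the paper: the reality of $q_1,\bar p_1$ and imaginarity of $q_2,\bar p_2$ from Lemma~\ref{lem:eigen} give the parity selection, and an explicit placement count then yields the four surviving coefficients. Your placement sums ($-i\mu$ for one $v$-slot, $-i\mu^{3}$ for three) are the right ones, whereas the paper's displayed expansion writes $+i\mu a_{21}$ and $+i\mu^{3}a_{03}$; that sign slip is harmless for $C_1$ but would flip the signs of $b_{21},b_{03}$ if carried into $C_2$ by the paper's ``likewise'', so your explicit computation is the more reliable one.
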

	
	\begin{proof}
		The coefficient with $k$ $v$-slots multiplies a sum of products of $k$ factors
		from $\{q_{2},q_{2},\bar q_{2}\}=\{-i\mu,-i\mu,i\mu\}$ and $j$ factors equal to
		$1$; each product is $(\pm i)^{k}\mu^{k}$, so the sum is $i^{k}\rho$ with
		$\rho$ real. Since $\bar p_{1}$ is real and $\bar p_{2}$ imaginary
		(Lemma~\ref{lem:eigen}), the contribution is real exactly in the stated
		parities. Expanding $C_{1}=a_{30}+i\mu a_{21}+\mu^{2}a_{12}+i\mu^{3}a_{03}$ and
		likewise $C_{2}$, and taking real parts, gives \eqref{eq:parity}.
	\end{proof}
	
	\begin{theorem}[Annihilation after straightening]\label{thm:vanishing}
		In the coordinates \eqref{eq:normalform}, $\lone$ is independent of
		$a_{21},a_{03},b_{30},b_{12}$; equivalently, of
		$H_{20},H_{02},W_{30},W_{12}$, where
		$H_{jk}=\partial_{u}^{j}\partial_{v}^{k}H$ and
		$W_{jk}=\partial_{u}^{j}\partial_{v}^{k}W$ at $(\xstar,0)$.
	\end{theorem}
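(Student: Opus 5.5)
The proof reduces the claim to Lemma~\ref{lem:parity}, and then translates from the component jets $a_{jk},b_{jk}$ to the jets of $H$ and $W$. I work throughout in the coordinates of Theorem~\ref{thm:normalform}, where the value of \eqref{eq:kuz} agrees with the original one. Independence is meant in the usual sense: $\lone$ is viewed as a function of the cubic coefficients with the linear and quadratic jets held fixed.

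\emph{Step 1: only the cubic pairing sees cubic data.} First I observe, as in the proof of Proposition~\ref{prop:generalelision}, that the linear part $A_{0}$ of \eqref{eq:A0} does not depend on the cubic jet. Hence neither do the eigendata of Lemma~\ref{lem:eigen} or the resolvents $A_{0}^{-1}$ and $(2i\wO I-A_{0})^{-1}$. The two quadratic terms of \eqref{eq:kuz} involve only $B$. So every cubic coefficient $a_{jk},b_{jk}$ with $j+k=3$ enters $\lone$ only through $\frac{1}{2\wO}\Real\ip{p}{C(q,q,\bar q)}$, and it enters linearly. By \eqref{eq:parity} this real part contains only $a_{30},a_{12},b_{21},b_{03}$. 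The partial derivatives of $\lone$ with respect to $a_{21},a_{03},b_{30},b_{12}$ therefore vanish, which is the first form of the statement.

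\emph{Step 2: translation to $H$ and $W$.} For the second component this is immediate, since $b_{jk}=W_{jk}$ by definition. For the first component $-vH$, Leibniz' rule at $v=0$ gives
\[
a_{jk}=\partial_{u}^{j}\partial_{v}^{k}(-vH)\big|_{v=0}=-k\,H_{j,k-1}\qquad(k\ge1),
\qquad a_{j0}=0 .
\]
In particular $a_{21}=-H_{20}$, $a_{03}=-3H_{02}$, $a_{12}=-2H_{11}$ and $a_{30}=0$. This uses $H\in C^{3}$, so the second derivatives of $H$ exist and are continuous.

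\emph{Step 3: the main point.} For the word ``equivalently'' I must check that $H_{20},H_{02},W_{30},W_{12}$ do not also enter through lower-order slots of the jet. This is where care is needed. By the formula above, the lower-order entries of the first component are:
\begin{itemize}[label={}]
\item at quadratic order, $a_{20}=0$, $a_{11}=-H_{10}$ and $a_{02}=-2H_{01}$;
\item at linear order, $a_{10}=0$ and $a_{01}=-H_{00}=-P$.
\end{itemize}
The only second derivatives of $H$ occurring anywhere in the $3$-jet of $-vH$ are therefore $H_{20}$ in $a_{21}$, $H_{11}$ in $a_{12}$, and $H_{02}$ in $a_{03}$. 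Likewise $W_{30}$ and $W_{12}$ occur only as $b_{30}$ and $b_{12}$.

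It follows that varying $H_{20},H_{02},W_{30},W_{12}$ with all other jet data held fixed moves exactly $a_{21},a_{03},b_{30},b_{12}$. By Step~1 this leaves $\lone$ unchanged, which proves the equivalent formulation.
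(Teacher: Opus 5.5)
Your proposal is correct and follows essentially the same route as the paper: cubic data enter only through $\Real\ip{p}{C(q,q,\bar q)}$, Lemma~\ref{lem:parity} removes the four slots, and the Leibniz identities $a_{j,k+1}=-(k+1)H_{jk}$, $a_{j0}=0$, $b_{jk}=W_{jk}$ translate them into $H_{20},H_{02},W_{30},W_{12}$. Your Step~3 adds one thing the paper leaves implicit in the word ``equivalently'': an explicit check that these four quantities appear nowhere else in the $3$-jet.
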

	
	\begin{proof}
		Cubic data enter only through $\Real\ip{p}{C(q,q,\bar q)}$; apply
		Lemma~\ref{lem:parity}. From $\tilde f_{1}=-vH$,
		\begin{equation}\label{eq:acoefs}
			\partial_{u}^{j}\partial_{v}^{k+1}\tilde f_{1}=-(k+1)H_{jk},
			\qquad \partial_{u}^{3}\tilde f_{1}|_{v=0}=0,
		\end{equation}
		so $a_{30}=0$, $a_{21}=-H_{20}$, $a_{12}=-2H_{11}$, $a_{03}=-3H_{02}$, and
		$b_{jk}=W_{jk}$.
	\end{proof}
	
	\begin{remark}
		The slots $a_{03}$ and $b_{30}$ are annihilated by both mechanisms;
		$a_{21}$ and $b_{12}$ only by Mechanism~II, which is available because
		straightening moves the equilibrium data to a coordinate frame in which
		$J_{11}=0$. Mechanism~I, by contrast, needs no change of coordinates.
	\end{remark}
	
	\section{The criticality formula}\label{sec:main}
	
	\begin{theorem}\label{thm:main}
		Let $f$ be $C^{4}$ with a graph nullcline over $I$ satisfying \ref{HGraph}, and
		let $\Pstar$ satisfy \ref{HTrace}--\ref{HDet}. In the normalization
		\eqref{eq:qp},
		\begin{equation}\label{eq:main6}
			\lone=\frac{W_{21}}{4\wO}-\frac{P W_{20}W_{11}}{4\wO^{3}}
			-\frac{W_{02}W_{11}}{4P\wO}+\frac{\wO W_{03}}{4P^{2}}
			-\frac{\wO H_{11}}{2P^{2}}+\frac{\wO H_{01}(H_{10}-W_{02})}{2P^{3}},
		\end{equation}
		and, equivalently, in terms of the trace field,
		\begin{equation}\label{eq:mainintro}
			\lone=\frac{\tau''}{4\wO}-\frac{P\,\nu''\,\tau'}{4\wO^{3}}
			-\frac{(D_{y}+P')\,\tau'}{4P\wO}+\frac{\wO D_{yy}}{4P^{2}}
			+\frac{\wO f_{1yy}D_{y}}{4P^{3}},
		\end{equation}
		all derivatives being evaluated at $\Pstar$, respectively $\xstar$.
	\end{theorem}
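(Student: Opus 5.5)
The plan is to compute $\lone$ entirely in the straightened coordinates of Theorem~\ref{thm:normalform}. By that theorem and Lemma~\ref{lem:normalization}, this does not change the value of \eqref{eq:kuz} in the normalization \eqref{eq:qp}. At $(\xstar,0)$ the linear part is $A_{0}$ of \eqref{eq:A0}, and Lemma~\ref{lem:eigen} gives $q=(1,-i\mu)$ and $\bar p=(\tfrac12,i/(2\mu))$ with $\mu=\wO/P$. I would first prove \eqref{eq:main6} in terms of the coefficients $H_{jk},W_{jk}$. Afterwards I would translate it into \eqref{eq:mainintro} using Proposition~\ref{prop:jets} and the nullcline identities.

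\emph{Cubic part.} Lemma~\ref{lem:parity} together with \eqref{eq:acoefs} gives $a_{30}=0$, $a_{12}=-2H_{11}$, $b_{21}=W_{21}$ and $b_{03}=W_{03}$. Hence
\[
\Real\ip{p}{C(q,q,\bar q)}=\tfrac12W_{21}-\mu^{2}H_{11}+\tfrac{\mu^{2}}{2}W_{03}.
\]
After dividing by $2\wO$, this yields exactly the first, fourth and fifth terms of \eqref{eq:main6}.

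\emph{Quadratic part.} From $\tilde f_{1}=-vH$, the quadratic coefficients are $a_{20}=0$, $a_{11}=-H_{10}$, $a_{02}=-2H_{01}$, together with $b_{jk}=W_{jk}$. I would compute:
\begin{itemize}[label={}]
\item $B(q,\bar q)$, whose entries are real because $q_{2}\bar q_{2}=\mu^{2}$ and the mixed slot cancels;
\item $A_{0}^{-1}B(q,\bar q)$, using $A_{0}^{-1}=\begin{pmatrix}0&P/\wO^{2}\\-1/P&0\end{pmatrix}$;
\item $B(q,q)$;
\item $(2i\wO I-A_{0})^{-1}B(q,q)$, where the $2\times2$ inverse has determinant $-3\wO^{2}$.
\end{itemize}
I would then pair each resulting vector with $\bar p$ against $B(q,\cdot)$ and $B(\bar q,\cdot)$, and take real parts.

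This bookkeeping is the main obstacle. Terms such as $W_{20}W_{02}$, $W_{11}^{2}$, $H_{10}W_{11}$ and $H_{01}W_{20}$ must be shown to cancel, or to be purely imaginary, between the $A^{-1}$ term and the resolvent term. Only the combinations $-PW_{20}W_{11}/(4\wO^{3})$, $-W_{02}W_{11}/(4P\wO)$ and $\wO H_{01}(H_{10}-W_{02})/(2P^{3})$ should survive. I would organize the computation by the same parity grading as in Lemma~\ref{lem:parity}: track powers of $i$ coming from $q_{2}$, $\bar p_{2}$ and the resolvent, so that imaginary contributions are discarded early. As a check, I would verify the result on a simple test field with one nonzero quadratic coefficient at a time.

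\emph{Translation to the trace field.} Proposition~\ref{prop:jets} gives $W_{20}=\nu''$, $W_{11}=\tau'$ and $W_{21}=\tau''$. Since $H(u,0)=P(u)$, we have $H_{10}=P'$. Differentiating $\tilde f_{1}=-vH$ twice in $v$ gives $H_{01}=-\tfrac12 f_{1yy}$, and more generally $\partial_{v}H(u,0)=-\tfrac12 f_{1yy}(u,g(u))$, whence $H_{11}=-\tfrac12(f_{1yy}|_{\gamma})'$.

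Next, differentiating $P=-f_{1y}|_{\gamma}$ along $\gamma$ gives $P'=-(f_{1xy}+g'f_{1yy})$. Therefore
\[
W_{02}=f_{2yy}-g'f_{1yy}=D_{y}-f_{1xy}-g'f_{1yy}=D_{y}+P',
\]
which produces the third term of \eqref{eq:mainintro} and gives $H_{10}-W_{02}=-D_{y}$. Hence the last term of \eqref{eq:main6} becomes $\wO f_{1yy}D_{y}/(4P^{3})$.

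Finally, $W_{03}=f_{2yyy}-g'f_{1yyy}$, and the chain rule gives $(f_{1yy}|_{\gamma})'=f_{1xyy}+g'f_{1yyy}$. Thus $W_{03}=D_{yy}-(f_{1yy}|_{\gamma})'$, so
\[
\frac{\wO W_{03}}{4P^{2}}-\frac{\wO H_{11}}{2P^{2}}=\frac{\wO D_{yy}}{4P^{2}}.
\]
This merges two terms of \eqref{eq:main6} into the single $D_{yy}$ term of \eqref{eq:mainintro}, and completes the equivalence of \eqref{eq:main6} and \eqref{eq:mainintro}.
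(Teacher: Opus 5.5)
Your proposal is correct and follows the paper's proof essentially step for step: straightened coordinates with the eigendata of Lemma~\ref{lem:eigen}, Lemma~\ref{lem:parity} with \eqref{eq:acoefs} for the cubic part, the explicit inverses $A_{0}^{-1}$ and $(2i\wO I-A_{0})^{-1}$ for the quadratic part, and the same chain-rule identities on $\gamma$ for the intrinsic form (your $W_{02}=D_{y}+P'$ and $W_{03}=D_{yy}-(f_{1yy}|_{\gamma})'$ are \eqref{eq:id5c}--\eqref{eq:id5d}). The quadratic bookkeeping you leave unexecuted becomes short once you note, as the paper does, that $\Real\ip{p}{B(\bar q,h_{20})}=\Real\ip{p}{B(q,h_{11})}$ with $h_{11}=A_{0}^{-1}B(q,\bar q)$ real, so the bracket collapses to $\Real\ip{p}{C(q,q,\bar q)}-\Real\ip{p}{B(q,h_{11})}$ and nothing cancels \emph{between} the two quadratic terms (the stray $H_{10}W_{11}$ and $H_{01}W_{20}$ cancel inside the resolvent term, while $W_{20}W_{02}$ and $W_{11}^{2}$ never reach a real part); note also that your one-coefficient-at-a-time test cannot confirm any of the surviving terms, since each is a product of two distinct coefficients.
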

	
	\begin{proof}[Step 1: the cubic term]
		By Lemma~\ref{lem:parity} and \eqref{eq:acoefs},
		\begin{equation}\label{eq:pfcubic}
			\Real\ip{p}{C(q,q,\bar q)}
			=-\mu^{2}H_{11}+\tfrac12 W_{21}+\tfrac{\mu^{2}}{2}W_{03}.
		\end{equation}
	\end{proof}
	
	\begin{proof}[Step 2: resolvents]
		$\tr A_{0}=0$ and $\dt A_{0}=\wO^{2}$ give $A_{0}^{2}=-\wO^{2}I$, hence
		$A_{0}^{-1}=-A_{0}/\wO^{2}$ and, from
		$(2i\wO I-A_{0})(2i\wO I+A_{0})=-3\wO^{2}I$,
		\begin{equation}\label{eq:pfres}
			(2i\wO I-A_{0})^{-1}=-\frac{2i\wO I+A_{0}}{3\wO^{2}} .
		\end{equation}
	\end{proof}
	
	\begin{proof}[Step 3: quadratic vectors]
		With $a_{20}=0$, $a_{11}=-H_{10}$, $a_{02}=-2H_{01}$ and $b_{jk}=W_{jk}$,
		using $q_{2}+\bar q_{2}=0$, $q_{2}\bar q_{2}=\mu^{2}$, $q_{2}^{2}=-\mu^{2}$,
		\[
		B(q,\bar q)=\begin{pmatrix}-2H_{01}\mu^{2}\\ W_{20}+W_{02}\mu^{2}\end{pmatrix},
		\quad
		B(q,q)=\begin{pmatrix}2i\mu H_{10}+2H_{01}\mu^{2}\\
			W_{20}-2i\mu W_{11}-\mu^{2}W_{02}\end{pmatrix},
		\]
		whence by \eqref{eq:pfres}
		\begin{equation}\label{eq:pfh11}
			h_{11}=A_{0}^{-1}B(q,\bar q)
			=\Bigl(\frac{PW_{20}}{\wO^{2}}+\frac{W_{02}}{P},\
			\frac{2H_{01}\wO^{2}}{P^{3}}\Bigr)^{\!\top},
		\end{equation}
		and $h_{20}=-(2i\wO I+A_{0})B(q,q)/(3\wO^{2})$.
	\end{proof}
	
	\begin{proof}[Step 4: the two quadratic scalars]
		Substituting \eqref{eq:pfh11} and $h_{20}$ into the bilinear form and pairing
		with $p$,
		\begin{equation}\label{eq:pfquad}
			\Real\ip{p}{B(q,h_{11})}=\Real\ip{p}{B(\bar q,h_{20})}
			=\frac{2H_{01}\wO^{4}(W_{02}-H_{10})+P^{4}W_{11}W_{20}
				+P^{2}W_{02}W_{11}\wO^{2}}{2P^{3}\wO^{2}} .
		\end{equation}
		This is an identity between rational expressions in
		$P,\wO,H_{01},H_{10},W_{11},W_{20},W_{02}$, obtained from Step~2 and
		Lemma~\ref{lem:eigen} alone; it holds for any $A_{0}$ with $\tr A_{0}=0$,
		$\dt A_{0}=\wO^{2}>0$. Hence the bracket in \eqref{eq:kuz} collapses to
		$\Real\ip{p}{C(q,q,\bar q)}-\Real\ip{p}{B(q,h_{11})}$, and inserting
		\eqref{eq:pfcubic} and \eqref{eq:pfquad}, with $\mu=\wO/P$, gives
		\eqref{eq:main6}.
	\end{proof}
	
	\begin{proof}[Step 5: the intrinsic form]
		Four chain-rule identities on $\gamma$ convert \eqref{eq:main6} into
		\eqref{eq:mainintro}; none uses \ref{HTrace} or \ref{HDet}.
		By Proposition~\ref{prop:jets},
		\begin{equation}\label{eq:id5a}
			W_{11}=\tau',\qquad W_{21}=\tau'',\qquad W_{20}=\nu'' .
		\end{equation}
		Since $H(u,0)=P(u)$,
		\begin{equation}\label{eq:id5b}
			H_{10}=P'=-\bigl(f_{1xy}+g'f_{1yy}\bigr).
		\end{equation}
		Since $\partial_{v}^{2}W=f_{2yy}-g'f_{1yy}$,
		\begin{equation}\label{eq:id5c}
			W_{02}-H_{10}=f_{1xy}+f_{2yy}=D_{y},
		\end{equation}
		the terms in $g'f_{1yy}$ cancelling. Since
		$W_{03}=f_{2yyy}-g'f_{1yyy}$ and, by \eqref{eq:acoefs} with $j=k=1$,
		$H_{11}=-\tfrac12(f_{1xyy}+g'f_{1yyy})$,
		\begin{equation}\label{eq:id5d}
			\frac{\wO W_{03}}{4P^{2}}-\frac{\wO H_{11}}{2P^{2}}
			=\frac{\wO}{4P^{2}}\bigl[f_{2yyy}+f_{1xyy}\bigr]=\frac{\wO D_{yy}}{4P^{2}},
		\end{equation}
		the two occurrences of $f_{1yyy}$ cancelling. Finally
		$H_{01}=-\tfrac12 f_{1yy}$, so by \eqref{eq:id5c}
		$\wO H_{01}(H_{10}-W_{02})/(2P^{3})=\wO f_{1yy}D_{y}/(4P^{3})$.
	\end{proof}
	
	\begin{remark}[Which data determine $\lone$]\label{rem:determination}
		Formula \eqref{eq:mainintro} exhibits $\lone$ as a rational function of
		$P,P',\nu',\nu'',\tau',\tau'',D_{y},D_{yy},f_{1yy}$ at $\Pstar$, with
		$\wO^{2}=P\nu'$ and denominators powers of $P$ and $\wO$. Four of the five
		terms are built from the $2$-jet of $D$; the $2$-jet of $D$ alone does not
		suffice. What is true is that $\lone$ is determined by the $3$-jet of $f$ at
		$\Pstar$, and \eqref{eq:mainintro} identifies the combination.
	\end{remark}
	
	\begin{corollary}[Reduction of the relevant jet]\label{cor:quotient}
		Let $\mathcal J^{3}$ denote the space of $3$-jets at $\Pstar$ with fixed linear
		part $A$ satisfying \ref{HTrace}--\ref{HDet}. Then $\lone:\mathcal J^{3}\to\R$
		factors through the quotient of $\mathcal J^{3}$ by the two coordinate
		directions $\partial/\partial f_{1yyy}$ and $\partial/\partial f_{2xxx}$; that
		is, $\lone$ depends on twelve of the fourteen second- and third-order
		coefficients.
	\end{corollary}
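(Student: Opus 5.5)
The plan is to read $\lone$ directly off formula \eqref{eq:kuz} as a function on $\mathcal J^{3}$ with $A$ held fixed. Once $A$ is fixed, the eigendata $q,p$ of \eqref{eq:qp}, the frequency $\wO$ and both resolvents $A^{-1}$, $(2i\wO I-A)^{-1}$ are constants on $\mathcal J^{3}$. The two quadratic terms are then polynomials in the six second-order coefficients $f_{ixx},f_{ixy},f_{iyy}$, $i=1,2$, and involve no cubic data. The cubic term $\Real\ip{p}{C(q,q,\bar q)}$ is a real-linear form in the eight third-order coefficients $f_{ixxx},f_{ixxy},f_{ixyy},f_{iyyy}$. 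So the first step is to write
\[
\lone=\Psi(\text{$2$-jet})+\sum_{i,\,j+k=3} c_{i,jk}\,\partial_x^{j}\partial_y^{k}f_{i}(\Pstar),
\]
where $\Psi$ is polynomial and the constants $c_{i,jk}$ are real and depend only on $A$.

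The second step is to invoke Proposition~\ref{prop:generalelision}, which says exactly that $c_{1,03}=0$ and $c_{2,30}=0$. Since the dependence on cubic data is affine with these two coefficients zero, $\lone$ is constant along the lines in $\mathcal J^{3}$ parallel to $\partial/\partial f_{1yyy}$ and $\partial/\partial f_{2xxx}$. That is the statement that $\lone$ factors through the quotient by the span of these two coordinate directions. The remaining count is $6+8-2=12$ coefficients.

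The only point needing care is the direction of the word ``depends''. The corollary asserts that $\lone$ depends on at most these twelve coefficients. To justify calling it dependence on twelve, I would check that none of the other six cubic coefficients drops out identically. Remark~\ref{rem:mechIscope} gives $c_{1,21}=c_{2,12}=-J_{11}/(2J_{12}\wO)$, which is nonzero off $J_{11}=0$. By Lemma~\ref{lem:pairing}, $c_{1,30}=\tfrac12\Real(3\bar p_1)=\tfrac34\neq0$; the factor $3$ comes from the three orderings of $(q,q,\bar q)$ that place one $\bar q$ among the slots. The coefficients $c_{1,12}$ and $c_{2,21}$ are similar explicit quantities that should be nonzero generically.

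I would state this sharpness as generic, because on $J_{11}=0$ more slots vanish (Mechanism~II). The second-order coefficients appear nonlinearly through $\Psi$, and their effective presence is visible from \eqref{eq:main6}. I expect no real obstacle: the whole argument is the observation that the fixed linear part freezes all eigen- and resolvent data, after which Proposition~\ref{prop:generalelision} applies verbatim.
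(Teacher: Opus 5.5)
Your argument is correct and is the paper's own: with $A$ fixed, the eigendata and resolvents are constants, the cubic jet enters \eqref{eq:kuz} only through the real-linear form $\Real\ip{p}{C(q,q,\bar q)}$, and Proposition~\ref{prop:generalelision} removes exactly the $f_{1yyy}$ and $f_{2xxx}$ slots, which is all the paper's one-line proof invokes. Your optional sharpness check has a slip. Since $f_{1xxx}$ enters through the single monomial $\bar p_{1}q_{1}^{2}\bar q_{1}$, one has $\partial\lone/\partial f_{1xxx}=\Real(\bar p_{1})/(2\wO)=1/(4\wO)$, not $\tfrac34$: there is no factor $3$, and you dropped the prefactor $1/(2\wO)$. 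You also omitted $f_{2yyy}$. In fact $\partial\lone/\partial f_{1xyy}=\partial\lone/\partial f_{2yyy}=\Lambda>0$ and $\partial\lone/\partial f_{2xxy}=1/(4\wO)$ on the whole Hopf set, so among the cubic slots the count of twelve is sharp exactly when $J_{11}\neq0$.
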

	
	\begin{proof}
		Immediate from Proposition~\ref{prop:generalelision}: the two partial
		derivatives vanish identically on $\mathcal J^{3}$.
	\end{proof}
	
	\begin{corollary}[Sign criterion on the Hopf set]\label{cor:algebraic}
		With $W_{10}=\nu'(\xstar)$ and
		\begin{equation}\label{eq:Xi}
			\begin{aligned}
				\Xi:={}&P^{2}W_{10}W_{21}+PW_{10}^{2}W_{03}-2PW_{10}^{2}H_{11}\\
				&+2W_{10}^{2}H_{01}(H_{10}-W_{02})-P^{2}W_{11}W_{20}-PW_{10}W_{02}W_{11},
			\end{aligned}
		\end{equation}
		one has $\lone=\Xi/(4P\wO^{3})$, hence $\sgn\lone=\sgn\Xi$.
	\end{corollary}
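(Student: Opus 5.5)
The plan is to start from formula \eqref{eq:main6} of Theorem~\ref{thm:main} and clear denominators; no new analysis is needed. On the Hopf set, Proposition~\ref{prop:hopfdata} gives $\wO^{2}=P\nu'(\xstar)=PW_{10}$, the last equality being Proposition~\ref{prop:jets} with $k=1$. I will multiply \eqref{eq:main6} by $4P\wO^{3}$ and then use $\wO^{2}=PW_{10}$ to turn every power of $\wO$ in the resulting numerator into a polynomial in $P$ and $W_{10}$. I will work term by term:
\[
\tfrac{W_{21}}{4\wO}\cdot 4P\wO^{3}=P\wO^{2}W_{21}=P^{2}W_{10}W_{21},
\qquad
-\tfrac{PW_{20}W_{11}}{4\wO^{3}}\cdot4P\wO^{3}=-P^{2}W_{11}W_{20},
\]
\[
-\tfrac{W_{02}W_{11}}{4P\wO}\cdot 4P\wO^{3}=-\wO^{2}W_{02}W_{11}=-PW_{10}W_{02}W_{11},
\qquad
\tfrac{\wO W_{03}}{4P^{2}}\cdot4P\wO^{3}=\tfrac{\wO^{4}W_{03}}{P}=PW_{10}^{2}W_{03}.
\]
In the same way, the $H_{11}$ term gives $-2\wO^{4}H_{11}/P=-2PW_{10}^{2}H_{11}$. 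The last term gives $2\wO^{4}H_{01}(H_{10}-W_{02})/P^{2}=2W_{10}^{2}H_{01}(H_{10}-W_{02})$. Adding these six contributions reproduces $\Xi$ in \eqref{eq:Xi} exactly, so $\lone=\Xi/(4P\wO^{3})$.

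For the sign statement, \ref{HGraph} gives $P>0$, and $\wO>0$ by convention under \ref{HDet}. The factor $4P\wO^{3}$ is therefore strictly positive, which gives $\sgn\lone=\sgn\Xi$.

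I expect no real obstacle. The only point to check is that $\wO^{4}=P^{2}W_{10}^{2}$ is used consistently, so that every negative power of $P$ cancels. Terms four through six carry $P^{-2}$ or $P^{-3}$ before the substitution, and the substitution supplies exactly enough powers of $P$ to clear them. This is also why the resulting numerator is polynomial in the jet data.
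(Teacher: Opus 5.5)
Your proposal is correct and follows the paper's proof: the paper multiplies \eqref{eq:main6} by $4P^{3}\wO^{3}$, substitutes $\wO^{2}=PW_{10}$, and divides out the resulting common factor $P^{2}$, which is the same computation as your direct multiplication by $4P\wO^{3}$. Your term-by-term products all check out, and the sign statement follows from $P>0$ and $\wO>0$, as in the paper.
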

	
	\begin{proof}
		Multiply \eqref{eq:main6} by $4P^{3}\wO^{3}$ and substitute
		$\wO^{2}=PW_{10}$ (Proposition~\ref{prop:hopfdata}), so
		$\wO^{4}=P^{2}W_{10}^{2}$; every term acquires a factor $P^{2}$, and dividing
		by it gives $4P\wO^{3}\lone=\Xi$. Positivity of $P$ and $\wO$ gives the sign
		statement.
	\end{proof}
	
	\begin{remark}\label{rem:algscope}
		The identity of Corollary~\ref{cor:algebraic} is not a jet identity: its proof
		substitutes $\wO^{2}=PW_{10}$, which is \ref{HDet}. It is valid on the Hopf
		set. The ten quantities in \eqref{eq:Xi} are coordinates on a jet space, not
		invariants of $f$; the statement that $\{\lone=0\}$ is algebraic refers to the
		subset of $3$-jet space cut out by $\{\tau=\nu=0,\ \Xi=0\}$ with $P,W_{10}>0$.
	\end{remark}
	
	\section{The Gause class and the third mechanism}\label{sec:gause}
	
	\begin{proposition}\label{prop:gauseclass}
		For $f$ with a graph nullcline, the following are equivalent:
		$\partial_{v}H\equiv0$; $f_{1}$ affine in $y$; $f_{1yy}\equiv0$. Every field
		with $f_{1}=x\varphi(x)-y\,p(x)$ satisfies them, with $H=p$.
	\end{proposition}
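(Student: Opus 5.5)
The plan is to prove the cycle of implications $\partial_{v}H\equiv0 \Rightarrow f_{1yyy}\dots$, or more precisely $\partial_{v}H\equiv0 \Leftrightarrow f_{1}$ affine in $y$ $\Leftrightarrow f_{1yy}\equiv0$. Throughout I work on the domain of the straightening chart of Theorem~\ref{thm:normalform}: $u=x$, $v=y-g(x)$, with $\tilde f_{1}(u,v)=f_{1}(u,v+g(u))=-vH(u,v)$. The affine-in-$y$ statement is read on that domain, where each vertical slice is an interval, as in Lemma~\ref{lem:hadamard}.

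\emph{Affine $\Leftrightarrow$ $f_{1yy}\equiv0$.} On each vertical slice, $f_{1yy}\equiv0$ means $f_{1}$ is affine in $y$. This follows by integrating twice in $y$ along the interval; the converse is immediate. Because the map $y\mapsto v=y-g(x)$ is a translation at fixed $x$, $f_{1}$ is affine in $y$ exactly when $\tilde f_{1}$ is affine in $v$. Likewise $\partial_{y}^{2}f_{1}=\partial_{v}^{2}\tilde f_{1}$.

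\emph{Affine $\Leftrightarrow$ $\partial_{v}H\equiv0$.} If $\partial_{v}H\equiv0$, then $H(u,v)=H(u,0)=P(u)$. Hence $\tilde f_{1}=-vP(u)$ is affine in $v$, and so $f_{1}$ is affine in $y$. Conversely, suppose $\tilde f_{1}$ is affine in $v$ and vanishes at $v=0$. Then $\tilde f_{1}=v\,c(u)$ with $c(u)=\partial_{v}\tilde f_{1}(u,0)=-P(u)$. Comparing with $\tilde f_{1}=-vH$ gives $H=P(u)$ for $v\neq0$, and by continuity $H\equiv P(u)$ everywhere, so $\partial_{v}H\equiv0$.

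An alternative route avoids continuity altogether. Differentiating $\tilde f_{1}=-vH$ twice in $v$ gives $\partial_{v}^{2}\tilde f_{1}=-2H_{v}-vH_{vv}$. This needs $H\in C^{2}$, which holds since $H\in C^{3}$. If $\partial_{v}^{2}\tilde f_{1}\equiv0$, then $w=H_{v}$ satisfies $2w+vw_{v}=0$, that is, $\partial_{v}(v^{2}w)=0$. So $v^{2}w$ is constant on each slice, and that constant is $0$ because the expression vanishes at $v=0$. Hence $w=0$ for $v\neq0$, and therefore everywhere by continuity. I would present the first argument and mention this one only as a check.

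\emph{The Gause field.} For $f_{1}=x\varphi(x)-y\,p(x)$, the field is visibly affine in $y$, so all three conditions hold. To identify $H$: the nullcline is $g=x\varphi/p$, which requires $p\neq0$; this is guaranteed by $f_{1y}=-p\neq0$. Then $\tilde f_{1}=x\varphi-(v+g)p=-vp$, so $H=p$.

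I do not expect a real obstacle here. The only point needing care is the domain: \enquote{affine in $y$} must be understood on vertical slices that are intervals, which is the same convention already used in Lemma~\ref{lem:hadamard}. The one genuinely nontrivial step is the continuity passage from $v\neq0$ to $v=0$ in the direction affine $\Rightarrow$ $\partial_{v}H\equiv0$.
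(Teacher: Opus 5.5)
Your proof is correct, but it closes the cycle of equivalences differently from the paper.

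The paper runs $\partial_vH\equiv0\Rightarrow f_1$ affine $\Rightarrow f_{1yy}\equiv0\Rightarrow\partial_vH\equiv0$. It gets the last step by differentiating $\tilde f_1=-vH$ twice and noting that $\psi=\partial_vH$ solves $v\psi_v+2\psi=0$, whose only solution regular at $v=0$ is zero.

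Your primary route gets $\partial_vH\equiv0$ from affinity instead. An affine function of $v$ vanishing at $v=0$ equals $v\,\partial_v\tilde f_1(u,0)=-vP(u)$, so $H=P(u)$ off $v=0$. You obtain $f_{1yy}\equiv0\Rightarrow$ affine by integrating twice on interval slices. This route:
\begin{itemize}
\item uses no derivatives of $H$;
\item identifies $H$ explicitly, so the Gause statement $H=p$ is just the case $P=p$;
\item makes explicit the interval-slice hypothesis that the paper inherits tacitly from Lemma~\ref{lem:hadamard}.
\end{itemize}
The step you single out as the only nontrivial one, passing from $v\neq0$ to $v=0$ by continuity, is not needed. $H(u,0)=P(u)$ is already part of Theorem~\ref{thm:normalform}.

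The paper's route goes straight from the pointwise condition $f_{1yy}\equiv0$ to $\partial_vH\equiv0$. The cost is needing $H\in C^{2}$ and an informal ``regular at $v=0$'' argument. Your alternative rewrites the ODE as $\partial_v(v^{2}\psi)=0$ on an interval slice containing $v=0$. That is a clean, rigorous form of exactly the paper's step, and it treats $v>0$ and $v<0$ together rather than solving $\psi=c_{\pm}/v^{2}$ on each side.

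The opening sentence mentioning $f_{1yyy}$ is a slip and should be deleted.
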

	
	\begin{proof}
		If $H=H(u)$ then $\tilde f_{1}=-vH(u)$, so $f_{1}=-(y-g(x))H(x)$ is affine in
		$y$, whence $f_{1yy}\equiv0$. Conversely $f_{1yy}\equiv0$ gives
		$\partial_{v}^{2}\tilde f_{1}\equiv0$, i.e.\
		$-2\partial_{v}H-v\partial_{v}^{2}H\equiv0$; writing $\psi=\partial_{v}H$ this
		is $v\psi_{v}+2\psi=0$, whose only solution regular at $v=0$ is $\psi\equiv0$.
		For a Gause field $\tilde f_{1}=x\varphi-(v+g)p=-vp$ since $g=x\varphi/p$.
	\end{proof}
	
	\begin{theorem}[Gause specialization]\label{thm:gause}
		If $f_{1}=x\varphi(x)-y\,p(x)$ and $f_{2}=yQ(x,y)$ then $f_{1yy}=0$ and
		\begin{equation}\label{eq:gause4}
			\lone=\frac{\tau''}{4\wO}-\frac{p\,\nu''\,\tau'}{4\wO^{3}}
			-\frac{(D_{y}+p')\,\tau'}{4p\,\wO}+\frac{\wO D_{yy}}{4p^{2}},
		\end{equation}
		with $\tau=pg'+Q+yQ_{y}$, $\nu=yQ$, $D_{y}=-p'+2Q_{y}+yQ_{yy}$,
		$D_{yy}=3Q_{yy}+yQ_{yyy}$ on $\gamma$.
	\end{theorem}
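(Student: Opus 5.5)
The plan is to specialize Theorem~\ref{thm:main} rather than recompute $\lone$ from scratch. First, the field $f_{1}=x\varphi(x)-y\,p(x)$ is affine in $y$, so $f_{1yy}\equiv0$ directly, and Proposition~\ref{prop:gauseclass} gives $H=p$, hence $P=p$ on $\gamma$. Hypothesis~\ref{HGraph} becomes $p>0$. The graph nullcline is $g=x\varphi/p$, which is $C^{4}$ wherever $p\neq0$ and $\varphi,p$ are $C^{4}$. Substituting $f_{1yy}=0$ and $P=p$, $P'=p'$ into \eqref{eq:mainintro} removes the fifth term $\wO f_{1yy}D_{y}/(4P^{3})$. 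The remaining four terms are exactly \eqref{eq:gause4}.

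The second step is to identify the ingredients $\tau,\nu,D_{y},D_{yy}$ for this class.

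\begin{itemize}
\item \emph{Trace.} Lemma~\ref{lem:traceid} gives $\tau=Pg'+f_{2y}|_{\gamma}$. With $f_{2}=yQ$ we have $f_{2y}=Q+yQ_{y}$, so $\tau=pg'+Q+yQ_{y}$ evaluated on $\gamma$.
\item \emph{Predator component.} By definition $\nu=f_{2}|_{\gamma}=yQ|_{\gamma}$.
\item \emph{First $y$-derivative of $D$.} Here $D=f_{1x}+f_{2y}$ with $f_{1x}=(x\varphi)'-y\,p'$. Hence $D_{y}=f_{1xy}+f_{2yy}=-p'+2Q_{y}+yQ_{yy}$.
\item \emph{Second $y$-derivative of $D$.} Since $f_{1xyy}=0$, we get $D_{yy}=f_{2yyy}=3Q_{yy}+yQ_{yyy}$.
\end{itemize}

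Each of these is a one-line product-rule computation, and all are then evaluated on $\gamma$, so at $\Pstar$.

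I expect no real obstacle. The only point needing care is that Theorem~\ref{thm:main} applies at all: it requires $f\in C^{4}$, a graph nullcline with $P>0$, and \ref{HTrace}--\ref{HDet} at $\Pstar$. These are the standing hypotheses, and the identification $P=p$ is what makes \ref{HGraph} read as positivity of the predation factor.

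One should also state that the normalization \eqref{eq:qp} is unchanged, since the result is simply a restriction of \eqref{eq:mainintro}. Optionally, the formula could be cross-checked against \eqref{eq:main6} using $H_{01}=H_{11}=0$ (because $\partial_{v}H\equiv0$), which kills the last two terms there and leaves $\wO W_{03}/(4p^{2})=\wO D_{yy}/(4p^{2})$, consistent with \eqref{eq:id5d}.
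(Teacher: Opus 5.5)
Your proposal is correct and follows essentially the paper's proof. You specialize \eqref{eq:mainintro} using $f_{1yy}=0$ and $P=p$, which removes the fifth term. You then read off $\tau,\nu,D_{y},D_{yy}$ from the same derivatives the paper lists, namely $f_{1x}=pg'$, $f_{1xy}=-p'$, $f_{1xyy}=0$, $f_{2y}=Q+yQ_{y}$, $f_{2yy}=2Q_{y}+yQ_{yy}$ and $f_{2yyy}=3Q_{yy}+yQ_{yyy}$. Your optional cross-check against \eqref{eq:main6} via $H_{01}=H_{11}=0$ is a consistent extra that the paper does not include.
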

	
	\begin{proof}
		$f_{1yy}=0$ by Proposition~\ref{prop:gauseclass}; the displayed expressions
		follow from $f_{1x}=pg'$, $f_{2y}=Q+yQ_{y}$, $f_{1xy}=-p'$,
		$f_{2yy}=2Q_{y}+yQ_{yy}$, $f_{1xyy}=0$, $f_{2yyy}=3Q_{yy}+yQ_{yyy}$.
	\end{proof}
	
	\begin{corollary}[$Q_{xxx}$, revisited]\label{cor:Qxxx}
		For a Gause field, $\lone$ is independent of $Q_{xxx}(\Pstar)$. This is the
		specialization of Proposition~\ref{prop:generalelision}: since
		$f_{2}=yQ$, the only slot carrying $Q_{xxx}$ is $f_{2xxx}=yQ_{xxx}$, and
		$\partial\lone/\partial f_{2xxx}=0$ in general. No property of the Gause
		structure is used beyond the identification $f_{2xxx}=yQ_{xxx}$.
	\end{corollary}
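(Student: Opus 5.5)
The plan is to reduce the claim to Proposition~\ref{prop:generalelision}, checking only that $Q_{xxx}(\Pstar)$ reaches $\lone$ through exactly one jet slot. The first step is to compute which coefficients of the $3$-jet of $f$ at $\Pstar$ involve $Q_{xxx}$. Since $f_{1}=x\varphi(x)-y\,p(x)$ contains no $Q$, only $f_{2}=yQ(x,y)$ matters. Differentiating gives
\[
f_{2xxx}=yQ_{xxx},\qquad f_{2xxy}=Q_{xx}+yQ_{xxy},\qquad f_{2xyy}=2Q_{xy}+yQ_{xyy},\qquad f_{2yyy}=3Q_{yy}+yQ_{yyy}.
\]
The second derivatives of $f_2$ and the linear part $A$ involve only derivatives of $Q$ of order at most two. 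Hence $Q_{xxx}(\Pstar)$ appears in the $3$-jet of $f$ at $\Pstar$ only through $f_{2xxx}=\ystar Q_{xxx}(\Pstar)$.

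The second step is to view $\lone$ as a function on the jet space $\mathcal J^{3}$ with fixed linear part, as in Corollary~\ref{cor:quotient}. We then treat $Q_{xxx}(\Pstar)$ as a free parameter while all other derivatives of $Q$ at $\Pstar$ are held fixed. Varying it leaves $A$, $q$, $p$, the resolvents and the quadratic form $B$ unchanged, and moves the cubic form $C$ only in the $f_{2xxx}$ slot. The chain rule then gives
\[
\frac{\partial\lone}{\partial Q_{xxx}}=\ystar\,\frac{\partial\lone}{\partial f_{2xxx}}=0
\]
by Proposition~\ref{prop:generalelision}. That proposition needs only \ref{HTrace}--\ref{HDet} and the normalization \eqref{eq:qp}, and both are in force.

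As an independent consistency check, formula \eqref{eq:gause4} of Theorem~\ref{thm:gause} can be inspected directly. It involves $\tau''$, $\nu''$, $\tau'$, $D_y$, $D_{yy}$, $p$ and $p'$. None of these contains $Q_{xxx}$: $\tau''$ contains $Q_{xx}$ and $Q_{xxy}$ but not $Q_{xxx}$, and $\nu''=(gQ)''$ along $\gamma$ is second order. This confirms the conclusion without invoking Mechanism~I, although the corollary emphasizes that Mechanism~I is the actual reason.

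The only real obstacle is bookkeeping. We must make sure no other slot of $C$ or $B$, including through the $y$-factor in $f_2=yQ$, picks up $Q_{xxx}$. The displayed derivatives settle this, since each extra $x$-derivative of $yQ$ falls on $Q$ alone.
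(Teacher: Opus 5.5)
Your proposal is correct and is essentially the paper's argument: $Q$ enters the $3$-jet only through $f_{2}=yQ$, and the single slot containing $Q_{xxx}$ is $f_{2xxx}=\ystar Q_{xxx}(\Pstar)$. The chain rule together with Proposition~\ref{prop:generalelision} then gives $\partial\lone/\partial Q_{xxx}=0$; your extra check through \eqref{eq:gause4} is also sound (in the straightened frame $Q_{xxx}$ could only enter via $W_{30}=\nu'''$, which Theorem~\ref{thm:vanishing} removes), though it is not needed.
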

	
	\subsection{Mechanism III: the reparametrization \texorpdfstring{$\varphi\mapsto g$}{phi to g}}
	
	The disappearance of $p'''$ is of a different nature: it is neither
	Mechanism~I nor Mechanism~II, since $\lone$ does depend on $f_{1xxx}$, and
	$f_{1xxx}=3\varphi''+x\varphi'''-y\,p'''$ does contain $p'''$.
	
	\begin{proposition}\label{prop:pppp}
		For a Gause field, express $\varphi,\varphi',\varphi'',\varphi'''$ through
		$g,g',g'',g'''$ and $p,p',p'',p'''$ by means of $g=x\varphi/p$. After this
		substitution $\lone$ is independent of $p'''(\xstar)$, and $p$ enters only
		through $p,p',p''$.
	\end{proposition}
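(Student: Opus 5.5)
The plan is to work from the Gause specialization, formula \eqref{eq:gause4} of Theorem~\ref{thm:gause}, rather than from the raw Kuznetsov expression \eqref{eq:kuz}. Each ingredient of \eqref{eq:gause4} is expressed in terms of $g$ and its derivatives, $p,p'$ and their derivatives, and derivatives of $Q$ on $\gamma$:
\[
\tau=pg'+Q+yQ_{y},\quad \nu=yQ,\quad D_{y}=-p'+2Q_{y}+yQ_{yy},\quad D_{yy}=3Q_{yy}+yQ_{yyy},
\]
all evaluated along $y=g(x)$. The point is that after the substitution $\varphi\mapsto g$ (i.e. $x\varphi=pg$), $\varphi$ no longer appears anywhere, so the only question is the highest derivative of $p$ that each term requires.

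The terms are then inventoried one at a time. The quantities $\nu$, $\nu'$, $\nu''$ involve only $g$ and $Q$, with no $p$ at all, so $\wO^{2}=p\nu'$ involves $p$ but not $p'$. Next, $\tau=pg'+Q(x,g)+gQ_{y}(x,g)$, which gives $\tau'=p'g'+pg''+(\dots)$ and $\tau''=p''g'+2p'g''+pg'''+(\dots)$, where the dots are built from $Q$-derivatives composed with $g$ together with $g$-derivatives. Hence $\tau''$ involves at most $p''$. The coefficients $D_{y}$, $D_{yy}$, $p$, $p'$ are explicit and involve at most $p'$. Every term of \eqref{eq:gause4} therefore depends on $p$ only through $p,p',p''$, and $p'''$ is absent.

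The one point that needs care is reconciling this with the remark that $f_{1xxx}$ contains $p'''$ and that $\lone$ depends on $f_{1xxx}$. I would make the mechanism explicit. In the original jet, $f_{1xxx}=3\varphi''+x\varphi'''-yp'''$. Differentiating the identity $x\varphi=pg$ three times gives $3\varphi''+x\varphi'''=(pg)'''$, which contains $p'''g$. At $y=g$ this cancels the $-yp'''$ term exactly, leaving $f_{1xxx}|_{\gamma}=3p''g'+3p'g''+pg'''$, with no $p'''$. This cancellation is consistent with Theorem~\ref{thm:vanishing}, since $a_{30}=0$ is the straightened reflection of the same fact.

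I would also justify legitimacy of the substitution. Because $p>0$ (by \ref{HGraph} with $P=p$) and $\xstar\neq0$, the map from $(\varphi,\varphi',\varphi'',\varphi''')$ to $(g,g',g'',g''')$ at fixed $p$-jet is invertible and triangular. Regarding $\lone$ as a function of $(g^{(k)},p^{(k)},Q\text{-jet})$ is therefore a valid change of variables on jet space, and the partial derivative with respect to $p'''$ at fixed $g$-jet is what the statement refers to. The main obstacle is only bookkeeping: checking that no hidden $p'''$ enters through $\tau''$ via the chain rule in $Q(x,g(x))$. The chain rule produces only $g$-derivatives there, so the check closes.
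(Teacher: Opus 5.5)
Your proposal is correct and is essentially the paper's argument: you read off from \eqref{eq:gause4} every place where $p$ enters ($P=p$, $P'=p'$, $\wO^{2}=p\nu'$, $D_{y}$, and $\tau=pg'+Q+yQ_{y}$), and you observe that the highest derivative produced is $p''$, via $\tau''$. Your additional check, that $f_{1xxx}=(pg)'''-yp'''$ reduces at $y=g$ to $3p''g'+3p'g''+pg'''$, is also correct; it usefully explains at the level of the jet why the paper's remark that $f_{1xxx}$ ``contains $p'''$'' is no obstruction.
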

	
	\begin{proof}
		By Theorem~\ref{thm:gause}, $p$ occurs in \eqref{eq:gause4} in $P=p$, in
		$P'=p'$, in $\tau=pg'+Q+yQ_{y}$, and in $D_{y}=-p'+2Q_{y}+yQ_{yy}$ and
		$D_{yy}=3Q_{yy}+yQ_{yyy}$. The highest $p$-derivative appearing is $p''$,
		inside $\tau''=(pg')''+\cdots=p''g'+2p'g''+pg'''+\cdots$. Since
		\eqref{eq:gause4} contains no other occurrence of $p$, $p'''$ cannot occur.
	\end{proof}
	
	\begin{remark}[Three mechanisms, three scopes]\label{rem:threemech}
		It is worth isolating what each mechanism does.
		\begin{enumerate}[label=\textup{(\Roman*)},nosep]
			\item \emph{Pairing.} $f_{1yyy}$ and $f_{2xxx}$ never reach $\lone$, for any
			planar field at a Hopf point, in any coordinates, with or without a nullcline
			(Proposition~\ref{prop:generalelision}).
			\item \emph{Straightening and parity.} In the coordinates
			\eqref{eq:normalform}, where $J_{11}=0$, two further slots are annihilated
			(Theorem~\ref{thm:vanishing}); by \eqref{eq:mechIIvalue} their contribution is
			proportional to $J_{11}=Pg'$. This mechanism requires the graph hypothesis.
			\item \emph{Reparametrization.} Within the Gause class, writing the prey jet in
			terms of $g$ rather than $\varphi$ removes $p'''$
			(Proposition~\ref{prop:pppp}). This is a change of variables in the jet, not a
			cancellation in the pairing.
		\end{enumerate}
		Earlier accounts attributed (I) to the Gause structure; it is general, and the
		Gause calculation merely makes it visible. Only (III) is genuinely
		Gause-specific.
	\end{remark}
	
	\section{Predator-dependent responses}\label{sec:predator}
	
	Consider, on a domain in $\{y>0\}$,
	\begin{equation}\label{eq:sys}
		\dot x=x\varphi(x)-y\,\Phi(x,y),\qquad \dot y=y\,Q(x,y),
	\end{equation}
	with $\Phi\in C^{4}$, so that $f_{1y}=-(\Phi+y\Phi_{y})$ and
	\begin{equation}\label{eq:carriers}
		f_{1yy}=-2\Phi_{y}-y\Phi_{yy},\quad
		f_{1xyy}=-2\Phi_{xy}-y\Phi_{xyy},\quad
		f_{1yyy}=-3\Phi_{yy}-y\Phi_{yyy}.
	\end{equation}
	
	\subsection{What the carriers characterize}
	
	\begin{proposition}\label{prop:carriers}
		Let $V\subseteq\{y>0\}$ be open with connected vertical sections.
		\begin{enumerate}[label=\textup{(\alph*)},nosep]
			\item $\Phi_{y}\equiv0$ on $V$ implies $f_{1yy}\equiv f_{1xyy}\equiv
			f_{1yyy}\equiv0$.
			\item $f_{1yy}\equiv0$ on $V$ if and only if $\Phi(x,y)=A(x)/y+B(x)$ there.
			\item The converse of \textup{(a)} holds if and only if $A\equiv0$; a
			sufficient condition is that $\Phi$ be bounded near $y=0$, or extend
			continuously to $y=0$.
		\end{enumerate}
	\end{proposition}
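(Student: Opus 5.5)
The plan is to work directly from the carrier formulas \eqref{eq:carriers}, treating each part as an elementary statement about functions of $y$ on the vertical sections $V_x=V\cap(\{x\}\times\R)$. By hypothesis each $V_x$ is an open interval in $(0,\infty)$, so ODE uniqueness in $y$ applies with $x$ as a parameter.

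For (a), I would simply substitute $\Phi_y\equiv0$ into \eqref{eq:carriers}. On the open set $V$, the identity $\Phi_y\equiv0$ implies $\Phi_{yy}$, $\Phi_{xy}$, $\Phi_{xyy}$ and $\Phi_{yyy}$ all vanish identically, because derivatives of an identically vanishing $C^3$ function vanish. Hence all three carriers are zero.

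For (b), the key observation is that $f_{1yy}=-2\Phi_y-y\Phi_{yy}=-\frac1y\,\partial_y^2\bigl(y\Phi\bigr)$. This holds because $\partial_y^2(y\Phi)=2\Phi_y+y\Phi_{yy}$. So $f_{1yy}\equiv0$ on $V$ is equivalent, given $y>0$, to $\partial_y^2(y\Phi)\equiv0$. On each interval $V_x$ this says $y\Phi(x,y)=A(x)+B(x)y$, i.e.\ $\Phi=A(x)/y+B(x)$. Connectedness of the sections is exactly what makes $A,B$ depend on $x$ alone. For regularity I would take $B(x)=\partial_y(y\Phi)$ and $A(x)=y\Phi-yB$, evaluated at any point of $V_x$; these are $C^3$ locally in $x$. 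The converse is a direct differentiation: with $\Phi=A/y+B$, one gets $y\Phi=A+By$, whose second $y$-derivative is zero.

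For (c), I read the converse of (a) as the implication from the vanishing of the three carriers to $\Phi_y\equiv0$. If the carriers vanish, then $f_{1yy}\equiv0$ and (b) gives $\Phi=A/y+B$. Then $\Phi_y=-A(x)/y^2$, which vanishes identically iff $A\equiv0$. Conversely, for any $A\not\equiv0$ the field $\Phi=A/y+B$ has all three carriers zero: $f_{1yy}=0$ by (b), and $f_{1xyy}=\partial_xf_{1yy}=0$ and $f_{1yyy}=\partial_yf_{1yy}=0$. Yet $\Phi_y\neq0$ there, so the converse fails exactly when $A\not\equiv0$.

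For the sufficient condition, suppose $V_x$ accumulates at $y=0$ and $\Phi$ is bounded near $y=0$ (a continuous extension to $y=0$ is a special case). Then $A(x)=y\Phi-yB\to0$ as $y\to0^+$, so $A(x)=0$. I would state this for sections reaching $y=0$, which is the implicit setting of the boundedness condition.

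The main obstacle is interpretive rather than computational. One has to fix what "the converse of (a)" means (the vanishing of all three carriers, or just $f_{1yy}$). One also has to make the domain assumption behind "near $y=0$" explicit. Once $f_{1yy}$ is rewritten as $-\frac1y\partial_y^2(y\Phi)$, everything else reduces to integrating $\partial_y^2(y\Phi)=0$ twice on an interval.
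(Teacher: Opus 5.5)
Your proposal is correct and follows the paper's route: write $f_{1yy}$ as a second $y$-derivative of $y\Phi$, integrate twice on each connected vertical section to get $\Phi=A/y+B$, read off $\Phi_y=-A/y^2$, and use boundedness near $y=0$ to force $y\Phi\to0$, hence $A=0$. One slip: the identity is $f_{1yy}=-\partial_y^2(y\Phi)$ with no factor $1/y$, as your own justification $\partial_y^2(y\Phi)=2\Phi_y+y\Phi_{yy}$ shows; since $y>0$, this does not affect the equivalence you draw from it.
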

	
	\begin{proof}
		(a) is immediate. (b) $f_{1yy}=-\partial_{y}^{2}(y\Phi)$, so $f_{1yy}\equiv0$
		iff $y\Phi$ is affine in $y$ on each vertical section, iff $\Phi=A/y+B$.
		(c) Then $\Phi_{y}=-A/y^{2}$, zero iff $A\equiv0$; boundedness near $y=0$
		forces $y\Phi\to0$, hence $A=0$.
	\end{proof}
	
	\begin{example}\label{ex:counter}
		With $A\not\equiv0$, $B>0$ and $\Phi=A(x)/y+B(x)$, one has
		$f_{1}=x\varphi-A-B y$ affine in $y$, so all three carriers vanish
		identically, while $\Phi_{y}=-A/y^{2}\not\equiv0$; and $f_{1y}=-B<0$, so
		\ref{HGraph} holds. Hence the assertion that the three carriers vanish
		identically if and only if $\Phi_{y}\equiv0$ fails on the open quadrant; it
		becomes valid under the boundedness hypothesis of
		Proposition~\ref{prop:carriers}(c), which holds for Beddington--DeAngelis
		\cite{Beddington1975,DeAngelis1975} and Crowley--Martin \cite{CrowleyMartin1989}.
	\end{example}
	
	\subsection{The affine truncation}
	
	\begin{definition}[Affine truncation]\label{def:aff}
		Let $\pi$ act on the $3$-jet $j^{3}_{\Pstar}f$ by replacing the coefficients
		\[
		f_{1yy}\mapsto0,\qquad f_{1xyy}\mapsto0,
		\]
		and leaving every other coefficient unchanged; in particular $\pi$ preserves
		the whole $1$-jet, hence $A$ and $\wO$. Since $\lone$ depends only on the
		$3$-jet, we set
		\[
		\lone^{\mathrm{aff}}:=\lone\bigl(\pi(j^{3}_{\Pstar}f)\bigr).
		\]
		A field realizing $\pi(j^{3}_{\Pstar}f)$ is its Taylor polynomial of degree
		three at $\Pstar$.
	\end{definition}
	
	\begin{remark}\label{rem:redundant}
		Imposing in addition $f_{1yyy}\mapsto0$ yields the same value of
		$\lone^{\mathrm{aff}}$, by Proposition~\ref{prop:generalelision}. The
		truncation therefore removes exactly the two nonlinear predator-dependent slots
		that can affect $\lone$.
	\end{remark}
	
	\begin{remark}[Why not ``freezing''\,]\label{rem:freezing}
		Replacing $\Phi(x,y)$ by $\Phi(x,\ystar)$ is a different operation. It
		preserves the equilibrium and, since $f_{1x}$ is unchanged at $\Pstar$, the
		trace; but it replaces $f_{1y}=-(\Phi+y\Phi_{y})$ by $-\Phi$, altering
		$J_{12}$, hence $\dt A$ and $\wO$. For the Beddington--DeAngelis point of
		\S\ref{sec:validation} it moves $J_{12}$ from $-0.332457$ to $-0.375$ and
		$\wO$ from $0.476144$ to $0.505839$. The comparison
		$\lone-\lone^{\mathrm{aff}}$ is meaningful precisely because
		Definition~\ref{def:aff} keeps $A$ and $\wO$ fixed.
	\end{remark}
	
	\subsection{The decomposition}
	
	\begin{theorem}\label{thm:decomp}
		Let $f$ be $C^{4}$ near $\Pstar$ with $f(\Pstar)=0$ and $A=Df(\Pstar)$, and
		assume \ref{HTrace}--\ref{HDet}; then $J_{12}\neq0$ and
		$J_{11}^{2}+\wO^{2}=-J_{12}J_{21}>0$. Adopt the normalization \eqref{eq:qp} and
		let $\lone^{\mathrm{aff}}$ be as in Definition~\ref{def:aff}. With
		$D=f_{1x}+f_{2y}$, $\nabla D=(D_{x},D_{y})$ and
		$\partial_{x}f=Ae_{1}=(J_{11},J_{21})^{\!\top}$, one has
		\begin{equation}\label{eq:decomp}
			\lone-\lone^{\mathrm{aff}}=\Delta
			=\Lambda\Bigl(f_{1xyy}
			+\frac{\ip{\partial_{x}f}{\nabla D}}{\wO^{2}}\,f_{1yy}\Bigr),
		\end{equation}
		where
		\begin{equation}\label{eq:Lambda}
			\Lambda=\frac{J_{11}^{2}+\wO^{2}}{4\wO J_{12}^{2}}
			=-\frac{J_{21}}{4\wO J_{12}}>0 .
		\end{equation}
		Moreover $\lone$ has degree at most one in each of $f_{1yy}$ and $f_{1xyy}$ and
		degree zero in $f_{1yyy}$; the positivity of $\Lambda$ follows from
		\ref{HTrace}--\ref{HDet} alone, and no orientation hypothesis such as
		$J_{12}<0$, $J_{21}>0$ is used.
	\end{theorem}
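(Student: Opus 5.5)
The plan is to reduce Theorem~\ref{thm:decomp} to the intrinsic formula \eqref{eq:mainintro} of Theorem~\ref{thm:main}. Since $J_{12}=f_{1y}(\Pstar)\neq0$, the implicit function theorem gives a local $C^{4}$ graph nullcline $y=g(x)$ through $\Pstar$, with $P(\xstar)=-J_{12}$. Hypothesis~\ref{HGraph} fixes the sign $P>0$, but I will check that Steps~1--5 of Theorem~\ref{thm:main} use only $P\neq0$. The only places $P$ enters are $\mu=\wO/P$, Lemma~\ref{lem:eigen} and the rational identities \eqref{eq:pfquad}, and these are purely algebraic. If this check works, \eqref{eq:mainintro} holds at $\Pstar$ for either sign of $J_{12}$. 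This is what removes any orientation hypothesis such as $J_{12}<0$, $J_{21}>0$.

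Next I will locate $f_{1yy}$ and $f_{1xyy}$ inside \eqref{eq:mainintro}, using the dictionary $P=-J_{12}$, $g'=J_{11}/P=-J_{11}/J_{12}$ and $\wO^{2}=P\nu'$.
\begin{itemize}
\item The derivative $g''$ is obtained from the twice-differentiated identity $f_{1}(x,g(x))=0$: $g''=(f_{1xx}+2f_{1xy}g'+f_{1yy}g'^{2})/P$. It is affine in $f_{1yy}$ and free of third-order data.
\item $P'=-(f_{1xy}+g'f_{1yy})$, by \eqref{eq:id5b}.
\item $\nu''=f_{2xx}+2f_{2xy}g'+f_{2yy}g'^{2}+f_{2y}g''$.
\item $\tau'=D_{x}+g'D_{y}$, which contains no $f_{1yy}$.
\item $\tau''=D_{xx}+2D_{xy}g'+D_{yy}g'^{2}+D_{y}g''$.
\item $D_{yy}=f_{1xyy}+f_{2yyy}$, while $D_{x}$, $D_{y}$, $D_{xx}$ and $D_{xy}$ contain neither slot.
\end{itemize}
Hence every term of \eqref{eq:mainintro} is affine in $(f_{1yy},f_{1xyy})$, and no product of the two slots appears. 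This gives the degree statements. Degree zero in $f_{1yyy}$ is Proposition~\ref{prop:generalelision}, or directly the cancellation \eqref{eq:id5d}. It follows that $\lone-\lone^{\mathrm{aff}}$ is exactly the sum of the two linear coefficients times $f_{1xyy}$ and $f_{1yy}$. This also uses that $\pi$ fixes the $1$-jet, so $P$, $g'$, $\wO$ and $\nu'$ are unchanged.

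The $f_{1xyy}$ coefficient comes from $\tau''/(4\wO)$ and $\wO D_{yy}/(4P^{2})$. It equals
\[
\frac{g'^{2}}{4\wO}+\frac{\wO}{4P^{2}}=\frac{J_{11}^{2}+\wO^{2}}{4\wO J_{12}^{2}}=\Lambda .
\]
By \eqref{eq:hopfid} this equals $-J_{21}/(4\wO J_{12})$ and is positive under \ref{HTrace}--\ref{HDet} alone.

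The $f_{1yy}$ coefficient collects four contributions:
\begin{itemize}
\item $g'^{2}D_{y}/(4\wO P)$ from $\tau''$;
\item $-f_{2y}g'^{2}\tau'/(4\wO^{3})$ from $\nu''$, after cancelling one factor $P$;
\item $+g'\tau'/(4P\wO)$ from $P'$;
\item $\wO D_{y}/(4P^{3})$ from the last term.
\end{itemize}
The $D_{y}$ terms combine to $\Lambda D_{y}/P$. For the $\tau'$ terms, use $f_{2y}=J_{22}=-J_{11}=-Pg'$ to get $g'\tau'(Pg'^{2}+\wO^{2}/P)/(4\wO^{3})=\Lambda J_{11}\tau'/\wO^{2}$. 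So the $f_{1yy}$ coefficient is $\Lambda\bigl(J_{11}\tau'+(\wO^{2}/P)D_{y}\bigr)/\wO^{2}$.

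The final step, which I expect to be the main obstacle, is the identification
\[
J_{11}\tau'+\frac{\wO^{2}}{P}D_{y}=J_{11}D_{x}+J_{21}D_{y}=\ip{\partial_{x}f}{\nabla D}.
\]
Substituting $\tau'=D_{x}+g'D_{y}$ reduces this to $J_{11}g'+\wO^{2}/P=J_{21}$. That is $(J_{11}^{2}+\wO^{2})/P=J_{21}$, which is \eqref{eq:hopfid} with $P=-J_{12}$. The bookkeeping of signs through $g''$ and $P'$ must be done carefully, and I would cross-check the resulting $f_{1yy}$ coefficient against a direct computation from \eqref{eq:kuz} in the original coordinates. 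Together these steps give \eqref{eq:decomp}.
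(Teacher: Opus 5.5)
Your proposal is correct, but it runs in the opposite direction from the paper's proof. The paper never introduces a nullcline here. It works with \eqref{eq:kuz} in the original coordinates: $f_{1xyy}$ enters linearly through $C$, while $f_{1yy}$ enters through $B$ and hence a priori quadratically via $h_{11}$ and $h_{20}$. Substituting \eqref{eq:qp} and \eqref{eq:hopfid} shows that the quadratic part has zero real part. The paper then extracts the two linear coefficients, and the $f_{1yy}$-coefficient emerges directly as $\Lambda\wO^{-2}(J_{11}D_{x}+J_{21}D_{y})$; the nullcline form \eqref{eq:deltaloc2} comes only afterwards, as Corollary~\ref{cor:bridge}. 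You instead obtain a local graph nullcline from the implicit function theorem, apply \eqref{eq:mainintro}, and read the affine structure off the chain rule. In that formula $f_{1xyy}$ appears only in $D_{yy}$, and $f_{1yy}$ only in $g''$, $P'$ and the explicit last term, never in a product with itself or with $f_{1xyy}$. Your four $f_{1yy}$ contributions and the use of $f_{2y}=-Pg'$ check out, so you reach \eqref{eq:deltaloc2} first and run the identity \eqref{eq:bridge} backwards. Your route turns the substitution that the paper only reports (both the cancellation of the $f_{1yy}^{2}$ term and the values of $\kappa_{02},\kappa_{12}$) into a short hand computation. It also explains the cancellation: in \eqref{eq:pfquad} the factor $H_{01}=-\tfrac12 f_{1yy}$ is multiplied only by $W_{02}-H_{10}=D_{y}$, which is free of $f_{1yy}$. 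The price is twofold. First, it inherits everything behind Theorem~\ref{thm:main}, including the Step~4 identity and the invariance of \eqref{eq:kuz} under the nonlinear shear of Theorem~\ref{thm:normalform}. Second, it needs Theorem~\ref{thm:main} for $P<0$. Your justification of the latter is sound, since Lemmas~\ref{lem:eigen} and \ref{lem:parity} and Steps~2--5 are rational in $P$ and use only $P\neq0$. Alternatively, the reflection $y\mapsto-y$ flips the sign of $J_{12}$ while preserving $\lone$ and $\lone^{\mathrm{aff}}$ in the normalization \eqref{eq:qp}, as well as $f_{1yy}$, $f_{1xyy}$, $\Lambda$ and $\ip{\partial_{x}f}{\nabla D}$, so the case $P>0$ already suffices. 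The paper's route, by contrast, is self-contained in the original frame and does not depend on the straightening machinery at all.
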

	
	\begin{proof}
		Degree zero in $f_{1yyy}$ is Proposition~\ref{prop:generalelision}. The
		eigendata \eqref{eq:qp} and the resolvents depend only on $A$, so $f_{1xyy}$
		enters \eqref{eq:kuz} only through $C$, hence linearly. The coefficient
		$f_{1yy}$ enters through $B$, hence through $h_{11},h_{20}$ and the outer
		forms, a priori quadratically; its quadratic part is
		\[
		\bar p_{1}\Bigl[\bar q_{2}\bigl(R_{20}q_{2}^{2}e_{1}\bigr)_{2}
		+2q_{2}\bigl(R_{11}q_{2}\bar q_{2}e_{1}\bigr)_{2}\Bigr],
		\qquad R_{20}=(2i\wO I-A)^{-1},\ R_{11}=-A^{-1},
		\]
		and substituting \eqref{eq:qp} and \eqref{eq:hopfid} shows that its real part
		vanishes. Hence $\Delta=\kappa_{02}f_{1yy}+\kappa_{12}f_{1xyy}$.
		
		To evaluate the coefficients, substitute \eqref{eq:qp} into \eqref{eq:kuz},
		form the difference against $\pi(j^{3}_{\Pstar}f)$, and eliminate
		$J_{22}=-J_{11}$ and $J_{21}=-(J_{11}^{2}+\wO^{2})/J_{12}$ by \ref{HTrace} and
		\eqref{eq:hopfid}. One obtains $\kappa_{12}=\Lambda$ and
		\[
		\kappa_{02}
		=\frac{(J_{11}^{2}+\wO^{2})\bigl[J_{11}J_{12}D_{x}
			-(J_{11}^{2}+\wO^{2})D_{y}\bigr]}{4J_{12}^{3}\wO^{3}}
		=\frac{\Lambda}{\wO^{2}}\bigl(J_{11}D_{x}+J_{21}D_{y}\bigr),
		\]
		the second equality by \eqref{eq:hopfid}. Since $D=f_{1x}+f_{2y}$ gives
		$D_{x}=f_{1xx}+f_{2xy}$ and $D_{y}=f_{1xy}+f_{2yy}$, the bracket is
		$\ip{Ae_{1}}{\nabla D}$.
		
		Positivity: $J_{11}^{2}+\wO^{2}>0$ because $\wO>0$, and $4\wO J_{12}^{2}>0$
		because $J_{12}$ is real and nonzero. The second form in \eqref{eq:Lambda}
		follows from \eqref{eq:hopfid}, which also shows $J_{21}/J_{12}<0$; that
		inequality is a consequence of \ref{HTrace}--\ref{HDet}, not an extra
		hypothesis.
	\end{proof}
	
	\begin{remark}[Covariance]\label{rem:covariance}
		Under $q\mapsto cq$ with the corresponding adjoint normalization,
		\[
		\lone\mapsto|c|^{2}\lone,\quad
		\lone^{\mathrm{aff}}\mapsto|c|^{2}\lone^{\mathrm{aff}},\quad
		\Delta\mapsto|c|^{2}\Delta,\quad
		\Lambda\mapsto|c|^{2}\Lambda,
		\]
		by Lemma~\ref{lem:normalization} and because $\pi$ acts on the jet, not on the
		eigenvector. The bracket
		$f_{1xyy}+\wO^{-2}\ip{\partial_{x}f}{\nabla D}f_{1yy}$ is built from jet data
		and entries of $A$ only, hence is normalization-independent. Theorem
		\ref{thm:decomp} is therefore covariant, not invariant, all the weight residing
		in $\Lambda$. In the convention $q=(J_{12},i\wO-J_{11})$ one has
		$\Lambda=(J_{11}^{2}+\wO^{2})/(4\wO)$. The signs of $\lone$ and of $\Delta$,
		and the ratio $\Delta/\lone$, are independent of the convention.
	\end{remark}
	
	\begin{corollary}[Reduction of $\Delta$]\label{cor:deltajet}
		$\Delta$ depends on the $3$-jet only through $f_{1yy}$, $f_{1xyy}$ and the four
		second-order coefficients $f_{1xx},f_{2xy},f_{1xy},f_{2yy}$ entering
		$\nabla D$. It is independent of $f_{1xxx},f_{1xxy},f_{1yyy},f_{2xx}$ and of
		the entire cubic jet of $f_{2}$.
	\end{corollary}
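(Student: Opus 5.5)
The corollary should follow by reading off Theorem~\ref{thm:decomp}, so the plan is to inspect formula \eqref{eq:decomp} term by term and check which jet coefficients each factor depends on.

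First, $\Lambda$ in \eqref{eq:Lambda} depends only on $J_{11}$, $J_{12}$ and $\wO$, that is, only on the linear part $A$. It contains no second- or third-order coefficient. Likewise $\wO^{2}=\dt A$ and $\partial_{x}f=Ae_{1}=(J_{11},J_{21})^{\!\top}$ are $1$-jet data. The only jet coefficients that appear explicitly in the bracket are $f_{1xyy}$, $f_{1yy}$ and $\nabla D=(D_{x},D_{y})$. From $D=f_{1x}+f_{2y}$ we get $D_{x}=f_{1xx}+f_{2xy}$ and $D_{y}=f_{1xy}+f_{2yy}$, both evaluated at $\Pstar$. Hence $\Delta$ is a polynomial in exactly $f_{1yy}$, $f_{1xyy}$, $f_{1xx}$, $f_{2xy}$, $f_{1xy}$ and $f_{2yy}$, with coefficients depending on $A$ alone. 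This is the first assertion.

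For the independence statements, it suffices to note that none of the listed coefficients appears in \eqref{eq:decomp}. Those coefficients are $f_{1xxx}$, $f_{1xxy}$, $f_{1yyy}$, $f_{2xx}$ and all of $f_{2xxx},f_{2xxy},f_{2xyy},f_{2yyy}$.

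One point needs care. Theorem~\ref{thm:decomp} expresses $\Delta$ as a difference $\lone-\lone^{\mathrm{aff}}$ of two functions on the full $3$-jet space at fixed $A$, and the closed form \eqref{eq:decomp} is an identity on that space. So I must confirm that the theorem holds as an identity in all the other jet coefficients, rather than only after specializing them. Its proof supports this. Every coefficient other than $f_{1yy}$ and $f_{1xyy}$ is left unchanged by $\pi$. The cubic ones enter \eqref{eq:kuz} linearly and identically in $\lone$ and $\lone^{\mathrm{aff}}$, so they cancel in the difference. The quadratic ones enter through $B$, and they survive in $\Delta$ only through their cross terms with $f_{1yy}$. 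The proof computes these cross terms as $\kappa_{02}$, which involves only $D_{x}$ and $D_{y}$. The vanishing of the quadratic part in $f_{1yy}$ was already shown there.

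The main obstacle, then, is only to make sure that $\kappa_{02}$ really contains no cross term between $f_{1yy}$ and $f_{2xx}$ (or $f_{1xx}$, $f_{2xy}$ outside the combination $D_{x}$). I would settle this by citing the explicit formula for $\kappa_{02}$ in the proof of Theorem~\ref{thm:decomp}, which lists the full dependence. If needed, I would also run a spot check by differentiating $\Delta$ with respect to $f_{2xx}$ and confirming that the real part of the pairing $\bar p_{1}\,\bar q_{2}\,(h_{20})_{2}$-type term containing $f_{2xx}f_{1yy}$ vanishes.
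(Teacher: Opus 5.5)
Your proposal is correct and is the paper's own proof: read off \eqref{eq:decomp}, where $\Lambda$, $\wO$ and $\partial_{x}f=Ae_{1}$ are $1$-jet data and the bracket involves only $f_{1yy}$, $f_{1xyy}$ and $\nabla D=(f_{1xx}+f_{2xy},\,f_{1xy}+f_{2yy})$. Your worry about cross terms is already settled by Theorem~\ref{thm:decomp} itself, which is an identity on the whole jet space with $\kappa_{02}$ written out explicitly. If you do run your optional spot check, note that the $f_{2xx}f_{1yy}$ coefficient in $\Real\ip{p}{B(\bar q,h_{20})}$ does not vanish term by term: the $\bar p_{1}\bar q_{2}(h_{20})_{2}$ piece has real part $-1/(3J_{12})\neq0$, and it is cancelled by the $\bar p_{2}f_{2xx}(h_{20})_{1}$ piece, so you must check the sum rather than a single pairing.
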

	
	\begin{proof}
		Read off \eqref{eq:decomp}: $\Lambda$ depends only on $A$ and $\wO$, and the
		bracket only on the six listed coefficients.
	\end{proof}
	
	\subsection{The bridge to the localization theorem}
	
	\begin{corollary}[Decomposition of the pairing]\label{cor:bridge}
		Assume in addition that $f$ has a graph nullcline with $P=-f_{1y}(\Pstar)>0$,
		and set $\tau(x)=D(x,g(x))$. Then
		\begin{equation}\label{eq:bridge}
			\ip{\partial_{x}f}{\nabla D}=J_{11}\,\tau'+\frac{\wO^{2}}{P}\,D_{y},
		\end{equation}
		and consequently, using $J_{11}=P\,g'$ from Lemma~\ref{lem:traceid},
		\begin{equation}\label{eq:deltaloc2}
			\Delta=\Lambda\Bigl[f_{1xyy}
			+\Bigl(\frac{P\,g'}{\wO^{2}}\,\tau'+\frac{D_{y}}{P}\Bigr)f_{1yy}\Bigr].
		\end{equation}
	\end{corollary}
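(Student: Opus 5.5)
The identity \eqref{eq:bridge} is a first-order chain-rule computation along $\gamma$ combined with the Hopf relations; no normal-form data enter. I would start from the definition $\ip{\partial_{x}f}{\nabla D}=J_{11}D_{x}+J_{21}D_{y}$ (with $\partial_x f=Ae_1$ as in Theorem~\ref{thm:decomp}) and rewrite $D_{x}$ through the tangential derivative of the trace field. Since $\tau(x)=D(x,g(x))$, differentiation gives
\[
\tau'=D_{x}+g'D_{y}\qquad\text{on }\gamma,
\]
so $D_{x}=\tau'-g'D_{y}$ at $\Pstar$. Substituting,
\[
\ip{\partial_{x}f}{\nabla D}=J_{11}\tau'+\bigl(J_{21}-J_{11}g'\bigr)D_{y}.
\]

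The remaining task is to identify $J_{21}-J_{11}g'$ with $\wO^{2}/P$. Here I would use Lemma~\ref{lem:traceid} in the form $J_{11}=Pg'$, so $g'=J_{11}/P$, and $J_{12}=f_{1y}(\Pstar)=-P$. Then \eqref{eq:hopfid} reads $J_{11}^{2}+\wO^{2}=PJ_{21}$, i.e.\ $J_{21}=(J_{11}^{2}+\wO^{2})/P$. Hence
\[
J_{21}-J_{11}g'=\frac{J_{11}^{2}+\wO^{2}}{P}-\frac{J_{11}^{2}}{P}=\frac{\wO^{2}}{P},
\]
which is \eqref{eq:bridge}. As a cross-check, in the straightened picture of Proposition~\ref{prop:hopfdata} this coefficient is exactly $\nu'(\xstar)=\wO^{2}/P$. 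That is consistent, since $\nu'=f_{2x}+g'f_{2y}=J_{21}-g'J_{11}$ by \ref{HTrace}.

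For \eqref{eq:deltaloc2}, I would insert \eqref{eq:bridge} into \eqref{eq:decomp}, divide by $\wO^{2}$, and replace $J_{11}$ by $Pg'$ using Lemma~\ref{lem:traceid}. There is no real obstacle. The only points needing care are that \ref{HTrace}--\ref{HDet} are genuinely used, through \eqref{eq:hopfid}, so the identity holds on the Hopf locus and not as a jet identity, and that the sign convention $J_{12}=-P$ must be tracked correctly.
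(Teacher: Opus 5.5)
Your proposal is correct and follows the paper's proof essentially verbatim: the chain rule gives $D_{x}=\tau'-g'D_{y}$, then $J_{11}=Pg'$, $J_{12}=-P$ and \eqref{eq:hopfid} reduce $J_{21}-J_{11}g'$ to $\wO^{2}/P$, and the result is substituted into \eqref{eq:decomp}. Your cross-check via $\nu'(\xstar)=J_{21}-g'J_{11}=\wO^{2}/P$ from Proposition~\ref{prop:hopfdata} is a pleasant addition that the paper does not make explicit.
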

	
	\begin{proof}
		Since $\tau(x)=D(x,g(x))$ we have $\tau'=D_{x}+g'D_{y}$, so
		$D_{x}=\tau'-g'D_{y}$ and
		\[
		\ip{\partial_{x}f}{\nabla D}=J_{11}D_{x}+J_{21}D_{y}
		=J_{11}\tau'+\bigl(J_{21}-J_{11}g'\bigr)D_{y}.
		\]
		By Lemma~\ref{lem:traceid}, $J_{11}=Pg'$ and $J_{12}=-P$, so
		$J_{11}g'=J_{11}^{2}/P=-J_{11}^{2}/J_{12}$, while \eqref{eq:hopfid} gives
		$J_{21}=-(J_{11}^{2}+\wO^{2})/J_{12}$. Hence
		\[
		J_{21}-J_{11}g'=-\frac{J_{11}^{2}+\wO^{2}}{J_{12}}+\frac{J_{11}^{2}}{J_{12}}
		=-\frac{\wO^{2}}{J_{12}}=\frac{\wO^{2}}{P},
		\]
		which is \eqref{eq:bridge}. Substituting into \eqref{eq:decomp} and using
		$J_{11}=Pg'$ gives \eqref{eq:deltaloc2}.
	\end{proof}
	
	\begin{remark}[The same object at two orders]\label{rem:twoorders}
		Formula \eqref{eq:deltaloc2} exhibits $J_{11}=Pg'$ twice over. At first order
		it is the quantity whose positivity is forced by $\tr A=0$ together with
		$f_{2y}<0$, and which therefore confines $\Pstar$ to an ascending branch
		(Theorem~\ref{thm:localization}, \cite[Thm.~4]{Companion}). At third order it
		is the weight multiplying the tangential derivative $\tau'$ of the trace field
		inside the predator-dependent correction. The transverse contribution carries
		instead the weight $1/P$, with $P>0$ the marginal predation rate. The two
		statements are successive orders of one local analysis.
	\end{remark}
	
	\begin{remark}[What $\lone^{\mathrm{aff}}$ is, and is not]\label{rem:affmeaning}
		$\lone^{\mathrm{aff}}$ is the first Lyapunov coefficient of the $y$-affine
		truncation of the $3$-jet at fixed linear part. It is not ``the nullcline
		contribution'': by \eqref{eq:gause4} it still contains $\tau''$, $\nu''$ and
		$D_{yy}$. What $\Delta$ isolates is exactly the contribution of the two
		predator-dependent curvature slots $f_{1yy}$ and $f_{1xyy}$, which by
		Proposition~\ref{prop:generalelision} and \eqref{eq:carriers} are the only
		channels through which the predator dependence of $\Phi$ can reach $\lone$.
	\end{remark}
	
	\begin{remark}[Two channels]\label{rem:twochannels}
		The bracket in \eqref{eq:decomp} has two terms of different type. The first,
		$f_{1xyy}$, involves no trace-field data. The second couples $f_{1yy}$ to
		$\wO^{-2}\ip{\partial_{x}f}{\nabla D}$. It would therefore be inaccurate to say
		that predator dependence acts only through the trace field. The accurate
		statement is that it acts through exactly two scalars, and that the weight
		attached to the second admits the decomposition \eqref{eq:bridge}. Within the
		equilibrium set the Hopf condition is $D=0$, so $\nabla D$ is conormal to the
		Hopf locus and $\ip{\partial_{x}f}{\nabla D}$ is the derivative of $D$ along
		$Ae_{1}$; it vanishes exactly when $Ae_{1}$ is tangent to $\{D=0\}$, and then
		the second channel closes.
	\end{remark}
	
	\section{Specializations and validation}\label{sec:validation}
	
	We distinguish three activities. An analytic proof establishes a statement for
	all fields satisfying the hypotheses; all theorems above are of this kind. A
	symbolic verification confirms an algebraic identity between expressions in
	indeterminates, and is a proof of that identity; we invoke it once, in Step~4
	of Theorem~\ref{thm:main}. A numerical validation evaluates two formulas on one
	field: it can refute but not prove, and serves here to confirm implementations
	and specializations. The numbers below are of the third kind.
	
	All values use Definition~\ref{def:norm} and were computed at $50$ significant
	digits with exact symbolic derivatives. For the two predator-dependent models
	the Hopf point is located by a high-precision nonlinear solve of
	$\{f_{1}=0,\ f_{2}=0,\ \tr A=0\}$ in $(\xstar,\ystar,k)$; the field retains its
	state variables and only $k$ is substituted, so that the Jacobian is that of
	the field. The three Gause benchmarks are parametrized exactly and need no
	solve.
	
	\subsection{Gause benchmarks}
	For Bazykin's model \cite{Bazykin1998} with $r=1$, $b=\tfrac12$, $e=1$,
	$\sigma=\tfrac1{10}$, $k=\tfrac32$, $a=\tfrac{16}{25}$, $d=\tfrac{7}{50}$, the
	equilibrium is $(\tfrac3{10},1)$ with $\wO=\sqrt{11}/10$, and
	\eqref{eq:gause4} returns the exact value
	\[
	\lone=-\frac{2775\sqrt{11}}{3872}=-2.3769715375\ldots,
	\]
	agreeing with a direct evaluation of \eqref{eq:kuz}.
	
	For the Holling type~IV Leslie system with harvesting, at the midpoint
	$\xstar=\tfrac14$ of the two positive critical points of the cubic $g$, one has
	$\wO=\sqrt{69}/52$, and both routes return
	\[
	\lone=-\frac{5624\sqrt{69}}{3887}=-12.0186397234\ldots.
	\]
	The second example confirms that no step uses a quadratic nullcline, the
	admissible branch being the middle interval of a cubic.
	
	A predator-independent control ($\Phi_{y}=0$, Holling~II, no predator
	self-regulation, prey saturation $\tfrac15$, conversion $\tfrac{17}{20}$,
	death rate $\tfrac7{20}$, whence $\xstar=\tfrac{35}{78}$ and
	$k=\tfrac{230}{39}$) gives $f_{1yy}=f_{1xyy}=f_{1yyy}=0$ and $\Delta=0$
	exactly, with
	\[
	\lone=\lone^{\mathrm{aff}}=-\frac{78\sqrt{62790}}{342125}
	=-0.0571287911\ldots.
	\]
	There $J_{22}=0$, so \ref{HTrace} forces $J_{11}=0$ and the Hopf point sits at
	$g'=0$: the degenerate Rosenzweig--MacArthur configuration of
	Remark~\ref{rem:scope}, in which \ref{HDamp} fails.
	
	\subsection{Predator-dependent models}
	We follow the parameter letters of \cite{Companion}: the prey equation is
	$\dot x=\rho x(1-x/k)-y\Phi$ and the predator equation
	$\dot y=\gamma y\Phi-dy$, with $a$ the attack rate, $b$ the prey saturation and
	$c$ the predator interference. Crowley--Martin \cite{CrowleyMartin1989}, with
	$\Phi=ax/[(1+bx)(1+cy)]$: $\rho=a=1$, $b=\tfrac15$, $c=\tfrac2{25}$,
	$\gamma=\tfrac{17}{20}$, $d=\tfrac7{20}$, $k^{*}\approx8.84879$.
	Beddington--DeAngelis \cite{Beddington1975,DeAngelis1975}, with
	$\Phi=ax/(1+bx+cy)$: $\rho=a=1$, $b=\tfrac14$, $c=\tfrac3{25}$,
	$\gamma=\tfrac45$, $d=\tfrac3{10}$, $k^{*}\approx8.66359$.
	
	\begin{table}[ht]
		\centering
		\caption{Validation of \eqref{eq:mainintro}, \eqref{eq:decomp} and
			\eqref{eq:bridge}, in the normalization of Definition~\ref{def:norm}. The last
			two rows are convention-independent.}
		\label{tab:num}
		\begin{tabular}{lcc}
			\toprule
			& Crowley--Martin & Beddington--DeAngelis\\
			\midrule
			$\wO$ & $0.524402894868$ & $0.476144193409$\\
			$g'(\xstar)$ & $0.0769287475$ & $0.1023727006$\\
			$f_{1yy}$ & $0.0554005721$ & $0.0632573639$\\
			$f_{1xyy}$ & $0.1022128288$ & $0.1102638971$\\
			$\lone$ from \eqref{eq:kuz} & $-0.038786150472$ & $-0.048101861097$\\
			$\lone$ from \eqref{eq:mainintro} & $-0.038786150472$ & $-0.048101861097$\\
			$\lone^{\mathrm{aff}}$ & $-0.035769767911$ & $-0.029800194795$\\
			$\Delta$ direct & $-0.003016382561$ & $-0.018301666302$\\
			$\Delta$ from \eqref{eq:decomp} & $-0.003016382561$ & $-0.018301666302$\\
			$\Lambda$ & $0.922342428$ & $1.082483243$\\
			\midrule
			$|\lone-\lone^{\mathrm{aff}}|/|\lone|$ & $7.78\%$ & $38.05\%$\\
			$\sgn\lone$ & negative & negative\\
			\bottomrule
		\end{tabular}
	\end{table}
	
	At both points $g'(\xstar)>0$, as Theorem~\ref{thm:localization} requires, and
	the two sides of \eqref{eq:bridge} agree to the working precision. For the
	three exactly parametrized models every residual vanishes identically. For the
	two predator-dependent points the residuals of \eqref{eq:decomp} and
	\eqref{eq:bridge} are below $10^{-40}$, while the residual of $J_{11}=Pg'$ is
	limited by the precision of the located Hopf point rather than by the
	arithmetic and is reported against a separate, looser tolerance. These figures
	describe the implementation, not the theorems.
	
	\begin{remark}[An independent check against the companion paper]
		\label{rem:crosscheck}
		The Crowley--Martin point admits a check that shares no step with the
		computation above. In \cite[Thm.~12]{Companion} the Hopf locus is available in
		closed form,
		\[
		c_{0}(x)=\frac{a\,b\,x\,(k_{0}-2bx)}{d\,(1+bx)^{2}\,(1+k_{0}-bx)},
		\qquad k_{0}=bk-1,
		\]
		whereas here the Hopf point is obtained by solving
		$\{f_{1}=f_{2}=\tr A=0\}$ numerically on the full field. At the point of
		Table~\ref{tab:num} the closed form returns $c_{0}(\xstar)=0.0799999999968$
		against the value $c=2/25$ actually imposed, and
		$\xstar=0.4933<x_{\mathrm v}=1.9244$, as \cite[Thm.~12]{Companion} requires.
		This comparison is evaluated separately from the accompanying notebook,
		which does not implement the closed-form locus of \cite{Companion}.
	\end{remark}
	
	\begin{remark}[Reading the $38\%$]\label{rem:38}
		At the Beddington--DeAngelis point $|\lone^{\mathrm{aff}}|=0.6195|\lone|$. The
		truncation reproduces the criticality sign but changes the coefficient
		governing the leading-order amplitude scaling by approximately $38\%$. This is
		not a statement about a $38\%$ error in a physical amplitude: in the normal
		form $\dot z=(\alpha+i\wO)z+c_{1}z|z|^{2}+O(|z|^{4})$ one has
		$r^{2}\sim-\alpha/\Real(c_{1})$, so $r\propto|\lone|^{-1/2}$ at fixed
		$\alpha$, and the leading-order radius changes by the factor
		$(0.6195)^{-1/2}=1.270$. Both ratios are independent of the eigenvector
		normalization.
	\end{remark}
	
	\begin{remark}[Conversion]\label{rem:conv}
		In the convention $q=(J_{12},i\wO-J_{11})$ the $\lone$ entries of
		Table~\ref{tab:num} read $-0.0055299354739$ and $-0.0053165817110$, smaller by
		$J_{12}^{2}=0.142575$ and $0.110528$; $\Lambda$ is larger by the same factors.
		The last two rows are unchanged.
	\end{remark}
	
	\section{Discussion}\label{sec:discussion}
	
	\subsection{Two orders of one organization}
	At first order the identity $J_{11}=Pg'$ converts the spectral condition
	$\tr A=0$ into the geometric restriction $g'(\xstar)>0$, so that the critical
	points of the prey nullcline are spectral barriers and the Hopf locus lies on
	ascending branches \cite[Thm.~4]{Companion}. At third order the same quantity
	$J_{11}=Pg'$ reappears, in \eqref{eq:deltaloc2}, as the weight multiplying the
	tangential derivative $\tau'$ of the trace field inside the predator-dependent
	correction to $\lone$, and again in \eqref{eq:mechIIvalue} as the factor
	controlling the two slots that straightening annihilates. The two results are
	successive local orders of one geometric organization of the Hopf bifurcation,
	and the present paper does not replace or duplicate the localization theorem:
	it answers the question that theorem leaves open, namely how the bifurcation
	unfolds once its location is constrained.
	
	We repeat the qualification of Remark~\ref{rem:affmeaning}:
	$\lone^{\mathrm{aff}}$ is not ``the nullcline part'' of $\lone$. It is the
	first Lyapunov coefficient of the $y$-affine jet truncation at fixed linear
	part, and it still contains $\tau''$, $\nu''$ and $D_{yy}$. What $\Delta$
	isolates is the contribution of $f_{1yy}$ and $f_{1xyy}$, the only two slots
	through which predator dependence can act.
	
	\subsection{Which third-order data matter}
	Corollaries~\ref{cor:quotient} and \ref{cor:deltajet} record that criticality
	does not use all of the $3$-jet. Two slots, $f_{1yyy}$ and $f_{2xxx}$, are
	irrelevant for every planar field at a Hopf point; two further slots are
	annihilated once the nullcline is straightened; and within the Gause class the
	reparametrization $\varphi\mapsto g$ eliminates $p'''$. The first phenomenon is
	a reality property of the eigenvector pairing, the second a parity grading
	available only after straightening, the third a change of variables in the jet.
	They are independent, and the earlier attribution of the first to the Gause
	structure was too narrow.
	
	\subsection{Bautin candidates}
	Along an ascending branch the Hopf set is $\{\nu=\tau=0,\ P\nu'>0\}$, and
	\eqref{eq:mainintro} gives $\lone$ there. A point of the Hopf set with
	$\lone=0$ is a \emph{Bautin candidate}. If $\lone$ takes opposite signs at two
	points of the same connected component, a Bautin candidate lies strictly
	between them, and $\Xi$ of \eqref{eq:Xi} changes sign there; this follows from
	the intermediate value theorem, the jet data depending continuously on
	$\xstar$. A Bautin candidate need not be a Bautin point: a nondegenerate
	generalized Hopf bifurcation requires in addition $\ell_{2}\neq0$ and a
	transversality condition on the map from parameters to $(\tr A,\lone)$
	\cite[\S8.3]{Kuznetsov2004}. Neither follows from the present hypotheses; the
	first requires the $5$-jet. We therefore do not write ``Bautin point'' for a
	point satisfying only $\tau=\lone=0$, and we assert no fold of limit cycles.
	
	\subsection{Limits}
	The graph hypothesis is essential: at a vertical tangency of the first
	nullcline, $P\to0$ and \eqref{eq:mainintro} is singular. The parity mechanism
	of \S\ref{subsec:mechII} uses $J_{11}=0$ in the straightened frame, hence is
	available only at points satisfying \ref{HTrace}. In dimension $n>2$ a
	codimension-one nullcline can be straightened and the analogue of $\tau$ is the
	restriction of the trace to it, but the transverse block no longer reduces to a
	single $2\times2$ matrix, so the eigendata of Lemma~\ref{lem:eigen} are not
	available and the parity grading of Lemma~\ref{lem:parity} does not extend
	verbatim. Lemma~\ref{lem:pairing} is likewise a statement about a planar
	pairing; its analogue in higher dimension would require a separate analysis.
	Finally, \cite{Companion} also localizes the Neimark--Sacker locus of the
	forward Euler discretization; the corresponding criticality analysis is not
	attempted here.
	
	\subsection*{Supplementary material}
	
	A self-contained Mathematica notebook in Wolfram Language is provided as
	ancillary material with the arXiv submission. It is organized in foldable
	sections following the argument of this paper, and separates computation,
	verification and presentation: every status it reports is derived at run time
	from a residual that is displayed alongside it, and exact symbolic checks are
	distinguished from high-precision numerical ones by an explicit tolerance. The
	notebook evaluates \eqref{eq:kuz}, \eqref{eq:mainintro}, \eqref{eq:decomp},
	\eqref{eq:bridge} and \eqref{eq:gause4}, carries out the symbolic verification
	used in Step~4 of Theorem~\ref{thm:main}, confirms \eqref{eq:mechIIvalue} and
	the elisions of Proposition~\ref{prop:generalelision}, reproduces the exact
	values of \S\ref{sec:validation} in closed form, and reproduces
	Table~\ref{tab:num}.  The notebook records $25$ symbolic checks and $38$
	model checks, all of which pass; the positivity of $\Lambda$ is decided in
	three separate steps (numerator, denominator, quotient), the vanishing of
	the two predator-dependent carriers is checked individually, and the
	degenerate configuration of the predator-independent control is verified by
	explicit checks that $J_{11}=J_{22}=0$ and that $g'(\xstar)=0$ there, so
	that the Hopf point sits at a critical point of the prey nullcline.
	
	\subsection*{Data availability}
	
	No datasets were generated or analysed during this study. The
	\textsc{Wolfram Language} verification notebook described in the
	Supplementary material is provided with this article as ancillary material.
	The notebook is self-contained and can be executed directly in Mathematica.
	
	\section*{Acknowledgements}
	E. Chan-L\'opez acknowledges support from SECIHTI through the
	``Estancias Posdoctorales por M\'exico'' program (CVU 422090).
	
	\section*{Ethics declarations}
	\subsection*{Conflict of interest}
	The author declares no conflict of interest.
	

\end{document}